\newtheorem{theorem}{Theorem}[section] % 1st argument is your name for it
\newtheorem{lemma}[theorem]{Lemma}     % 2nd argument is what is printed
\newtheorem{corollary}[theorem]{Corollary}
\newtheorem{prop}[theorem]{Proposition}
\newtheorem{fact}[theorem]{Fact}
\theoremstyle{definition}
\newtheorem{remark}[theorem]{Remark}
\newcommand{\id}{\ensuremath{\emph{id} } }
\newcommand{\Z}{\ensuremath{\mathbb{Z} } }
\newcommand{\Q}{\ensuremath{\mathbb{Q} } }
\newcommand{\Ha}{\ensuremath{\mathbb{H} } }
\newcommand{\K}{\ensuremath{\mathcal{K}}}
\newcommand{\Aut}{\ensuremath{\mathrm{Aut} } }
\newcommand{\Inn}{\ensuremath{\mathrm{Inn} } }
\begin{document}

\title[Ample generics]
{Hall universal group has ample generic automorphisms}

\author[S. Song]{Shichang Song}
\address{Department of Mathematics\\School of Sciences\\Beijing Jiaotong University\\3 Shangyuancun\\Beijing 100044\\
P.\,R.\,China}
\email{ssong@bjtu.edu.cn}

\date{}

\begin{abstract}
We show that the automorphism group of Philip Hall's universal locally finite group has ample generics,
that is, it admits comeager diagonal conjugacy classes in all dimensions.
Consequently, it has the small index property, is not the union of a countable chain of non-open subgroups, and has the automatic continuity property.
Also, we discuss some algebraic and topological properties of the automorphism group of Hall universal group.
For example, we show that every generic automorphism of Hall universal group is conjugate to all of its powers, and hence has roots of all orders.
\end{abstract}

\subjclass[2010]{Primary: 20D45; Secondary: 03E15, 20F28, 22A05}

\maketitle

\section{Introduction}
A Polish group is a separable and completely metrizable topological group. The automorphism group of a countable structure equipped with the product topology is a Polish group.
We say that a Polish group $G$ has \emph{ample generics} if for every $n$ the diagonal conjugacy action of $G$ on $G^n$ by
$$g\cdot(h_1,h_2\cdots,h_n)=(gh_1g^{-1},gh_2g^{-1},\cdots,gh_ng^{-1}),$$
where $g,h_1,h_2,\cdots,h_n\in G$, has a comeager orbit.
In \cite{HHLS}, Hodges \emph{et al.} first used the concept of ample generic as a tool to study the small index property for automorphism groups of some countable structures.
Based on the work of Hodges \emph{et al.} \cite{HHLS} and Truss \cite{Truss}, Kechris and Rosendal \cite{KR} characterized when automorphism groups of Fra\"{\i}ss\'{e} structures have ample generics. In \cite{KR}, they also showed that if a Polish group $G$ has ample generics, then $G$ has the following consequences:
\begin{enumerate}
  \item The group $G$ has the small index property, that is, every subgroup of index less than $2^{\aleph_0}$ is open.
  \item The group $G$ is not the union of a countable chain of non-open subgroups.
  \item Every algebraic homomorphism from $G$ into a separable group is automatically continuous, which implies that $G$ has a unique Polish group topology.
\end{enumerate}
Now there is a growing extensive list of Polish groups that have ample generics. Just mention a few:
\begin{enumerate}
  \item The automorphism group of $\omega$-stable $\omega$-categorical structures (Hodges \emph{et al.} \cite{HHLS});
  \item The automorphism group of the random graph (Hrushovski \cite{Hrushovski}; see also Hodges \emph{et al.} \cite{HHLS});
  \item The isometry group of the rational Urysohn space (Solecki \cite{Solecki});
  \item The group of Lipschitz homeomorphisms of the Baire space (Kechris and Rosendal \cite{KR});
  \item The group of Haar measure-preserving homeomorphisms of the Cantor set (Kechris and Rosendal \cite{KR});
  \item The group of homeomorphisms of the Cantor set (Kwiatkowska \cite{Ola}).
\end{enumerate}
In this paper, we will add another group into this list, which is the automorphism group of Philip Hall's universal locally finite group.
Note that Hall universal group is a countable homogeneous structure that is not $\omega$-categorical.
%Our main result is the following:

We follow Kechris and Rosendal's work \cite{KR}.
%to prove Theorem \ref{main theorem}.
Let $\mathcal{K}$ be a class of finite $L$-structures in a fixed countable signature $L$.
We say that $\mathcal{K}$ has the \emph{hereditary property} (HP) if $\mathbf{B}\in\mathcal{K}$ and $\mathbf{A}$ embeddable in $\mathbf{B}$ then $\mathbf{A}$ is isomorphic to some structure in $\K$.
We say that $\K$ has the \emph{joint embedding property} (JEP) if $\mathbf{A},\mathbf{B}\in\K$, then there is $\mathbf{C}\in\K$ such that both $\mathbf{A}$ and $\mathbf{B}$ are embeddable in $\mathbf{C}$.
We say that $\K$ has the \emph{amalgamation property} (AP) if $f\colon\mathbf{A}\to\mathbf{B}$ and $g\colon\mathbf{A}\to\mathbf{C}$, with $\mathbf{A},\mathbf{B},\mathbf{C}\in\K$, are embeddings, then there are $\mathbf{D}\in\K$ and embeddings $r\colon\mathbf{B}\to\mathbf{D}$ and $s\colon\mathbf{C}\to\mathbf{D}$ with $r\circ f=s\circ g$.
If $\K$ satisfies the HP, JEP, AP, contains only countably many structures up to isomorphism, and contains structures of arbitrarily large finite cardinality, we call $\K$  a \emph{Fra\"{\i}ss\'{e} class}. For every Fra\"{\i}ss\'{e} class $\K$, the classic Fra\"{\i}ss\'{e}'s theorem tells us that there is the unique (up to isomorphism) countably infinite $L$-structure $\mathbf{K}$ such that (i) $\mathbf{K}$ is locally finite, (ii) $\mathbf{K}$ is ultrahomogeneous, that is, every isomorphism between finite substructures of $\mathbf{K}$ extends to an automorphism of $\mathbf{K}$, (ii) $\mathrm{Age}(\mathbf{K})=\K$, where $\mathrm{Age}(\mathbf{K})$ is the class of all finite $L$-structures that can be embedded into $\mathbf{K}$.
We call such $\mathbf{K}$ the \emph{Fra\"{\i}ss\'{e} limit} of $\K$.
A countably infinite structure $\mathbf{K}$ is a \emph{Fra\"{\i}ss\'{e} structure} if it is locally finite and ultrahomogeneous.
For more background on Fra\"{\i}ss\'{e} classes and Fra\"{\i}ss\'{e} limits, see Hodges \cite[Chapter 7]{Hodges}.

For every Fra\"{\i}ss\'{e} class $\K$, Truss \cite{Truss} introduced the class $\K_p$ of all systems $\mathcal{S}=\langle \mathbf{A}, \psi\colon\mathbf{B}\to\mathbf{C}\rangle$,
where $\mathbf{A},\mathbf{B},\mathbf{C}\in\K$, $\mathbf{B},\mathbf{C}\subseteq\mathbf{A}$, and $\psi$ is an isomorphism between $\mathbf{B}$ and $\mathbf{C}$.
In the notation, $p$ is short for partial isomorphisms.
The notion of embedding in $\mathcal{K}_p$ is defined as follows:
an \emph{embedding} of $\mathcal{S}$ into $\mathcal{T}=\langle\mathbf{D},\varphi\colon\mathbf{E}\to\mathbf{F}\rangle$ is an embedding $f\colon\mathbf{A}\to\mathbf{D}$ such that
$f(\mathbf{B})\subseteq\mathbf{E}$, $f(\mathbf{C})\subseteq\mathbf{F}$, and $f\circ\psi\subseteq\varphi\circ f$.
With this notion of embedding, we may say that $\K_p$ satisfies the JEP or AP.
Similarly, let $\K_p^n$ denote the class of all systems $\mathcal{S}=\langle \mathbf{A}, \psi_1\colon\mathbf{B}_1\to\mathbf{C}_1,\cdots,\psi_n\colon\mathbf{B}_n\to\mathbf{C}_n\rangle$,
where $\mathbf{A},\mathbf{B}_i, \mathbf{C}_i\in\K$, $\mathbf{B}_i,\mathbf{C}_i\subseteq\mathbf{A}$, and $\psi_i$ is an isomorphism between $\mathbf{B}_i$ and $\mathbf{C}_i$ for each $i$. Accordingly, the notion of embedding in $\K_p^n$ is defined.

Let $\mathbf{K}$ be the Fra\"{\i}ss\'{e} limit of a Fra\"{\i}ss\'{e} class $\mathcal{K}$. Kechris and Rosendal (\cite[Theorem 6.2]{KR}) proved that $\Aut(\mathbf{K})$ has ample generics if and only if $\mathcal{K}_p^n$ has the JEP and the weak amalgamation property (WAP) for every $n$, which generalizes Truss's result. Truss \cite{Truss} showed that if for every $n$ the class $\mathcal{K}_p^n$ has the JEP and the cofinal amalgamation property (CAP), that is, it has a cofinal subclass which satisfies the AP, then $\Aut(\mathbf{K})$ has ample generics.
We say that a Fra\"{\i}ss\'{e} class has the \emph{Hrushovski property} if every system $\mathcal{S}=\langle\mathbf{A},\psi_1\colon\mathbf{B}_1\to\mathbf{C}_1,\cdots,\psi_n\colon\mathbf{B}_n\to\mathbf{C}_n\rangle\in\K_p^n$ can be extended to some system
$\mathcal{T}=\langle\mathbf{D},\varphi_1\colon\mathbf{D}\to\mathbf{D},\cdots,\varphi_n\colon\mathbf{D}\to\mathbf{D}\rangle\in\K_p^n$.
Let $\mathcal{L}^n$ denote the class of all systems $\langle\mathbf{D},\varphi_1\colon\mathbf{D}\to\mathbf{D},\cdots,\varphi_n\colon\mathbf{D}\to\mathbf{D}\rangle\in\K_p^n$.
If the Fra\"{\i}ss\'{e} class $\K$ has the Hrushovski property, then the subclass $\mathcal{L}^n$ of $\K_p^n$ is cofinal.
Further, if $\mathcal{L}^n$ has the AP, then $\K_p^n$ has the CAP. Usually, the JEP for $\K_p^n$ is straightforward.
It follows that $\mathbf{K}$ has ample generic automorphisms. Hrushovski \cite{Hrushovski} originally showed that the class of finite graphs has the Hrushovski property, and thus the automorphism group of the random graph has ample generics. Solecki \cite{Solecki}, independently Vershik, showed that the class of finite metric spaces with rational distances has the Hrushovski property, and thus the isometry group of the rational Urysohn space has ample generics. In those two cases, it is not hard to verify that $\mathcal{L}^n$ has the AP.

Philip Hall \cite{Hall} constructed a countably infinite locally finite group $\Ha$ which is determined up to isomorphism by the following two properties: (i) every finite group can be embedded in $\Ha$; (ii) any two isomorphic finite subgroups of $\Ha$ are conjugate in $\Ha$. The group $\Ha$ is known as Philip Hall's universal locally finite group.
Throughout this paper, let $\mathcal{FG}$ be the class of finite groups. The class $\mathcal{FG}$ is a Fra\"{\i}ss\'{e} class and its Fra\"{\i}ss\'{e} limit is $\Ha$. Hall's construction is totally different from Fra\"{\i}ss\'{e}'s construction, while Hall's construction replies on \cite[Lemma 1]{Hall}, which implies the Hrushovski property for the class $\mathcal{FG}$.
Thus, in order to show that $\Ha$ has ample generic automorphisms, we need only to show that the subclass $\mathcal{L}^n$, which is the class of finite groups with $n$ automorphisms, has the AP; see Theorem \ref{class of FG with automorphisms has AP}.

After this paper was put in the ArXiv.org, A.\,Ivanov informed the author that some of the results in this paper is similar to his paper \cite{Ivanov}.
We follow Kechris and Rosendal's setup, while his setup is different. The main result in \cite{Ivanov} is that $\Aut(\Ha)$ is strongly bounded, that is, every isometric action of $\Aut(\Ha)$ on a metric space has bounded orbits.

Rosendal \cite{Rosendal} considered powers and roots of the generic measure preserving homeomorphism of the Cantor space and
the generic isometry of the rational Urysohn metric space.
He proved that every generic measure preserving homeomorphism of the Cantor space and
every generic isometry of the rational Urysohn metric space are conjugate to all their powers.
For generic automorphism of Hall universal locally finite group, we also prove that it is conjugate to all of its powers,
and hence has roots of all orders.

This paper is organized as follows: In Section \ref{Section AP},
we show that the class of finite groups with $n$ automorphisms has the amalgamation property for every $n$.
In Section \ref{Section Aut(H)}, we prove that $\Aut(\Ha)$ has ample generics,
and we discuss some algebraic and topological properties of the automorphism group of Hall universal group.
In Section \ref{Section Powers}, we prove that every generic automorphism of $\Ha$ is conjugate to all of its powers,
and thus has roots of all orders.

\section{The amalgamation property}\label{Section AP}
Let $\mathcal{FG}$ be the class of finite groups.
For every $n$, let $\mathcal{L}^n$ denote the class of all systems
$\langle G, \varphi_1\colon G\to G,\cdots,\varphi_n\colon G\to G\rangle$,
where $G$ is a finite group, and $\varphi_1,\cdots,\varphi_n$ are automorphisms of $G$.
Let $\mathcal{A}=\langle A,\alpha_1\colon A\to A,\cdots,\alpha_n\colon A\to A\rangle$ and
$\mathcal{B}=\langle B,\beta_1\colon B\to B,\cdots$, $\beta_n\colon B\to B\rangle$ be two systems in $\mathcal{L}^n$.
We define the notion of embedding in $\mathcal{L}^n$ as follows:
an \emph{embedding} of $\mathcal{A}$ into $\mathcal{B}$ is an embedding $f\colon A\to B$ between groups
such that $f\circ\alpha_i=\beta_i\circ f$ and $g\circ\alpha_i=\gamma_i\circ g$ for $i=1,\cdots, n$.
Let us abuse notation and denote this embedding also by $f$.
In this section, we prove that the class $\mathcal{L}^n$ has the amalgamation property, that is,
if $f\colon\mathcal{A}\to\mathcal{B}$ and $g\colon\mathcal{A}\to\mathcal{C}$, with $\mathcal{A},\mathcal{B},\mathcal{C}\in\mathcal{L}^n$, are embeddings,
there are $\mathcal{D}\in\mathcal{L}^n$ and embeddings $r\colon\mathcal{B}\to\mathcal{D}$ and $s\colon\mathcal{C}\to\mathcal{D}$ with $r\circ f=s\circ g$.

In the case $n=0$, there is a classic result:
\begin{theorem}[{\cite{Neumann}}]\label{class of FG has AP}
 The class of finite groups has the amalgamation property.
\end{theorem}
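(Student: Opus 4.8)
The plan is to use B.\,H.\ Neumann's \emph{permutational product}, which builds a finite amalgam directly out of the left regular representations of $\mathbf{B}$ and $\mathbf{C}$. Identify $\mathbf{A}$ with the subgroup $f(\mathbf{A})\le\mathbf{B}$ and with $g(\mathbf{A})\le\mathbf{C}$. The starting point is the elementary observation that the restriction to $\mathbf{A}$ of the left regular representation of $\mathbf{B}$ is isomorphic to $[\mathbf{B}:\mathbf{A}]$ disjoint copies of the left regular representation of $\mathbf{A}$: acting on the set $\mathbf{B}$ by left translation, the $\mathbf{A}$-orbits are the right cosets $\mathbf{A}b$, each of size $|\mathbf{A}|$, and choosing a representative in each coset identifies the $\mathbf{A}$-action on that orbit with left translation of $\mathbf{A}$ on itself. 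The same holds with $\mathbf{C}$ in place of $\mathbf{B}$.

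Next, put $k=[\mathbf{B}:\mathbf{A}]$ and $\ell=[\mathbf{C}:\mathbf{A}]$, and let $\Omega=\mathbf{B}\times\{1,\dots,\ell\}$ and $\Omega'=\mathbf{C}\times\{1,\dots,k\}$, so that $|\Omega|=|\Omega'|=k\ell\,|\mathbf{A}|$. Let $\rho\colon\mathbf{B}\to\mathrm{Sym}(\Omega)$ be the (faithful) action by left translation on the first coordinate and trivially on the second, and let $\sigma\colon\mathbf{C}\to\mathrm{Sym}(\Omega')$ be defined analogously. By the previous paragraph, both $\rho\circ f$ and $\sigma\circ g$ make their underlying sets into $k\ell$ disjoint copies of the left regular representation of $\mathbf{A}$; hence there is a bijection $\theta\colon\Omega'\to\Omega$ intertwining them, i.e.\ with $\theta\circ\sigma(g(a))=\rho(f(a))\circ\theta$ for all $a\in\mathbf{A}$.

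Finally, take $\mathbf{D}=\mathrm{Sym}(\Omega)$, set $r=\rho\colon\mathbf{B}\to\mathbf{D}$, and define $s\colon\mathbf{C}\to\mathbf{D}$ by $s(c)=\theta\circ\sigma(c)\circ\theta^{-1}$. Then $r$ and $s$ are injective homomorphisms, and for each $a\in\mathbf{A}$ we have $s(g(a))=\theta\circ\sigma(g(a))\circ\theta^{-1}=\rho(f(a))=r(f(a))$, so $r\circ f=s\circ g$ and $\mathbf{D}\in\mathcal{FG}$; replacing $\mathbf{D}$ by $\langle r(\mathbf{B}),s(\mathbf{C})\rangle$ gives a smaller amalgam if desired. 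The only substantive step is the first paragraph: recognising the restricted regular representation as a multiple of the regular representation of the subgroup, together with the bookkeeping of multiplicities (both sides equal to $k\ell$) that makes the two copies of $\mathbf{A}$ literally conjugate inside a single symmetric group; once that is in place, everything else is formal. One could instead invoke Schreier's embedding of $\mathbf{B}$ and $\mathbf{C}$ into the amalgamated free product $\mathbf{B}\ast_{\mathbf{A}}\mathbf{C}$ followed by residual finiteness of such amalgams, but the permutational product has the advantage of staying finite and completely elementary.
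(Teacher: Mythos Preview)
Your argument is correct and is precisely Neumann's permutational product construction, which is what the paper cites (the paper itself gives no details beyond the reference to \cite{Neumann}). Your presentation via matching multiplicities of the regular $\mathbf{A}$-representation is a clean repackaging of Neumann's original setup on the set $S\times\mathbf{A}\times T$ of coset-representative triples; the two are equivalent, and your version makes the intertwining bijection $\theta$ transparent.
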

\begin{proof}
The proof is in Section 3 of \cite{Neumann}.
\end{proof}

In the case $n>0$, we have:
\begin{theorem}\label{class of FG with automorphisms has AP}
Let $\mathcal{L}^n$ denote the class of all systems $\langle G$, $\varphi_1\colon G\to G$, $\cdots$, $\varphi_n\colon G\to G\rangle$ for every $n$,
where $G$ is a finite group, and $\varphi_1$, $\cdots$ , $\varphi_n$ are automorphisms of $G$. Then the class $\mathcal{L}^n$ has the amalgamation property.
\end{theorem}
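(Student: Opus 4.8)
The plan is to construct the amalgam inside a finite quotient of the amalgamated free product $B *_A C$; the real content will be an equivariant form of residual finiteness. Write $\mathcal{B} = \langle B, \beta_1, \dots, \beta_n\rangle$ and $\mathcal{C} = \langle C, \gamma_1, \dots, \gamma_n\rangle$. After identifying $A$ with its images under $f$ and $g$, we may assume that $A = B\cap C$ is a common subgroup, that $f$ and $g$ are the inclusions, and that $\beta_i|_A = \gamma_i|_A = \alpha_i\in\Aut(A)$ for every $i\le n$. Form $G = B *_A C$. By the standard theory of amalgamated free products, $B$ and $C$ embed into $G$ with $B\cap C = A$ inside $G$, and $G$ is generated by the finite set $B\cup C$, so $G$ is finitely generated.

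First I would lift the automorphisms to $G$. For each $i$ the homomorphisms $B\xrightarrow{\beta_i}B\hookrightarrow G$ and $C\xrightarrow{\gamma_i}C\hookrightarrow G$ agree on $A$, since both restrict to $\alpha_i$ there; hence the universal property of the amalgamated free product yields a homomorphism $\theta_i\colon G\to G$ with $\theta_i|_B = \beta_i$ and $\theta_i|_C = \gamma_i$. Applying the same construction to $\beta_i^{-1}$ and $\gamma_i^{-1}$ produces a two-sided inverse of $\theta_i$, so $\theta_i\in\Aut(G)$.

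The main step is to find a finite-index subgroup $N\trianglelefteq G$ that is invariant under every $\theta_i$ and satisfies $N\cap B = N\cap C = \{1\}$. For this I use two classical facts: (a) $G = B *_A C$ is residually finite --- for instance, by Bass--Serre theory $G$ acts on a tree with finite stabilizers, hence is virtually free, hence residually finite; (b) a finitely generated group has only finitely many subgroups of each finite index. By (a), for each of the finitely many elements $x\in (B\cup C)\setminus\{1\}$ there is a finite-index normal subgroup of $G$ omitting $x$; intersecting them gives a finite-index normal $M\trianglelefteq G$ with $M\cap B = M\cap C = \{1\}$. Put $m = [G:M]$ and let $N$ be the intersection of all subgroups of $G$ of index at most $m$; by (b) this is a finite intersection, so $[G:N]<\infty$, and $N\le M$ forces $N\cap B = N\cap C = \{1\}$. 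Crucially, $N$ is \emph{characteristic} in $G$: every automorphism of $G$ permutes the collection of subgroups of index at most $m$ and hence preserves their intersection $N$; in particular $\theta_i(N) = N$ for all $i$. I expect this to be the main obstacle: one must manufacture a single finite quotient that kills no nontrivial element of $B$ or $C$ yet is stable under the prescribed automorphisms, and the step from ordinary residual finiteness to a \emph{characteristic} separating subgroup via (b) is exactly the device that achieves this.

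Finally, set $D = G/N$ with quotient map $\pi\colon G\to D$. Then $D$ is finite, each $\theta_i$ descends to $\delta_i\in\Aut(D)$, and I claim that $\mathcal{D} = \langle D,\delta_1,\dots,\delta_n\rangle$ with $r := \pi|_B\colon\mathcal{B}\to\mathcal{D}$ and $s := \pi|_C\colon\mathcal{C}\to\mathcal{D}$ is the required amalgam. The maps $r$ and $s$ are injective because $N\cap B = N\cap C = \{1\}$; they are morphisms in $\mathcal{L}^n$ since $r\circ\beta_i = \pi\circ\theta_i|_B = \delta_i\circ\pi|_B = \delta_i\circ r$, and similarly for $s$; and $r\circ f = \pi|_A = s\circ g$. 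Everything after the choice of $N$ is routine bookkeeping, so the only genuine difficulty is the equivariant residual finiteness isolated in the previous paragraph.
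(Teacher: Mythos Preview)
Your argument is correct, and it is genuinely different from the paper's. You build the amalgam as a quotient of $B*_A C$: lift the given automorphisms to $\theta_i\in\Aut(B*_A C)$ via the universal property, invoke virtual freeness (hence residual finiteness) to find a finite-index normal $M$ meeting $B$ and $C$ trivially, and then replace $M$ by the intersection of all subgroups of index at most $[G:M]$ to obtain a \emph{characteristic} such $N$; the quotient $G/N$ with the induced $\delta_i$ is the amalgam. The paper instead internalises the automorphisms: it forms the fiber product $W=\langle\beta_i\rangle\times_{\langle\alpha_i\rangle}\langle\gamma_i\rangle$ of the generated automorphism groups, takes the semidirect products $A\rtimes W\hookrightarrow B\rtimes W$ and $A\rtimes W\hookrightarrow C\rtimes W$, and applies the $n=0$ amalgamation property (Neumann's theorem) to these bare finite groups; the automorphisms $\delta_i$ are then recovered as conjugation by the image of $(\beta_i,\gamma_i)\in W$. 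Your route is shorter and more conceptual, but it imports Bass--Serre theory (or an equivalent proof that amalgamated products of finite groups are residually finite) and the finitely-many-subgroups-of-bounded-index fact; the paper's route is entirely elementary modulo the $n=0$ case, and the semidirect-product trick of turning the extra automorphisms into inner ones makes the reduction to Neumann's theorem transparent.
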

\begin{proof}
Let $\mathcal{A}=\langle A,\alpha_1\colon A\to A,\cdots,\alpha_n\colon A\to A\rangle$,
$\mathcal{B}=\langle B,\beta_1\colon B\to B,\cdots$, $\beta_n\colon B\to B\rangle$,
and $\mathcal{C}=\langle C,\gamma_1\colon C\to C,\cdots,\gamma_n\colon C\to C\rangle$,
where $A, B, C$ are finite groups and $\alpha_i, \beta_i, \gamma_i$ are automorphisms for $i=1,\cdots, n$.
Let $f\colon\mathcal{A}\to\mathcal{B}$ and $g\colon\mathcal{A}\to\mathcal{C}$ be embeddings in $\mathcal{L}^n$
satisfying that $f\colon A\to B$ and $g\colon A\to C$ are embeddings between groups
such that $f\circ\alpha_i=\beta_i\circ f$ and $g\circ\alpha_i=\gamma_i\circ g$ for $i=1,\cdots, n$.
Let $X=\langle\alpha_1,\cdots,\alpha_n\rangle$, $Y=\langle\beta_1,\cdots,\beta_n\rangle$, and $Z=\langle\gamma_1,\cdots,\gamma_n\rangle$.
Because $A, B, C$ are finite groups, we have that $X\leq\Aut(A)$, $Y\leq\Aut(B)$, $Z\leq\Aut(C)$ are of finite order.

\textbf{Claim}: There exist surjective homomorphisms $\varphi\colon Y\to X$ and $\psi\colon Z\to X$ such that $\varphi(\beta_i)=\alpha_i$ and $\psi(\gamma_i)=\alpha_i$ for $i=1,\cdots,n$. Moreover, for all $y\in Y$ and $z\in Z$, we have that $f\circ\varphi(y)=y\circ f$ and $g\circ\psi(z)=z\circ g$.

\emph{Proof of the claim}: To make things easier, we assume that $A$ is a subset of $B$ and $f$ is simply the inclusion map.
It follows from $f\circ\alpha_i=\beta_i\circ f$ that $\beta_i\restriction_A=\alpha_i$ for $i=1,\cdots,n$.
Thus, $\beta_i$ fixes $A$ setwise for $i=1,\cdots,n$. Since $Y=\langle\beta_1,\cdots,\beta_n\rangle$, we have that $y$ fixes $A$ setwise for all $y\in Y$.
We define $\varphi(y)=y\restriction_A$ for all $y\in Y$. Clearly, $\varphi(y)\in\Aut(A)$, and $\varphi(\beta_i)=\beta_i\restriction_A=\alpha_i$ for $i=1,\cdots,n$.
Since $X=\langle\alpha_1,\cdots,\alpha_n\rangle$, we have that $\varphi$ is a surjective homomorphism from $Y$ to $X$.
Moreover, $f\circ\varphi(y)=y\circ f$ for all $y\in Y$, because $f$ is the inclusion map and $\varphi(y)=y\restriction_A$.
The case for $\psi$ is similar. This completes the proof of the claim.

Let $W=Y\times_X Z$ be the fiber product of $Y$ and $Z$ over $X$, which is the following subgroup of $Y\times Z$
$$W=Y\times_X Z=\{(y,z)\in Y\times Z\mid\varphi(y)=\psi(z)\}.$$
Clearly, $W$ is finite and the projection homomorphisms are surjectively onto $Y$ and $Z$.
Then we define a semidirect product $A\rtimes W=\{\bigl(a,(y,z)\bigr)|a\in A, (y,z)\in W\}$.
The multiplication is defined as
$$\bigl(a_1,(y_1,z_1)\bigr)*\bigl(a_2,(y_2,z_2)\bigr)=\bigl(a_1\varphi(y_1)(a_2),(y_1y_2,z_1z_2)\bigr),$$
where $a_1,a_2\in A$,  and $ (y_1,z_1), (y_2,z_2)\in W$. Note that $\varphi(y_1)=\psi(z_1)$ since $(y_1,z_1)\in W$.
Let $L$ denote $A\rtimes W$.
Similarly, we define $M=B\rtimes W$ and $N=C\rtimes W$.
For all $b_1,b_2\in B$, $c_1,c_2\in C$, and $(y_1,z_1), (y_2,z_2)\in W$, we define
$$\bigl(b_1,(y_1,z_1)\bigr)*\bigl(b_2,(y_2,z_2)\bigr)=\bigl(b_1y_1(b_2),(y_1y_2,z_1z_2)\bigr),$$
and
$$\bigl(c_1,(y_1,z_1)\bigr)*\bigl(c_2,(y_2,z_2)\bigr)=\bigl(c_1z_1(c_2),(y_1y_2,z_1z_2)\bigr).$$
Clearly, $L, M, N$ are finite groups.
Next, we define $\tilde{f}\colon L\to M$ by sending $\bigl(a,(y, z)\bigr)$ to $\bigl(f(a),(y, z)\bigr)$ for all $a\in A$ and $(y, z)\in W$.
It is easy to verify that $\tilde{f}$ is well-defined and $\tilde{f}(\id_L)=\id_M$.
For all $\bigl(a_1,(y_1,z_1)\bigr), \bigl(a_2,(y_2,z_2)\bigr)\in L$, we have
\begin{align*}
 \tilde{f}\bigl((a_1,(y_1,z_1))*(a_2,(y_2,z_2))\bigr)&=\tilde{f}\bigl(a_1\varphi(y_1)(a_2),(y_1y_2, z_1z_2)\bigr)\\
 &=\bigl(f\bigl(a_1\varphi(y_1)(a_2)\bigr),(y_1y_2, z_1z_2)\bigr)\\
 &=\bigl(f(a_1)f\bigl(\varphi(y_1)(a_2)\bigr),(y_1y_2, z_1z_2)\bigr).
\end{align*}
By the claim, $f\circ\varphi(y)=y\circ f$ for all $y\in Y=\langle\beta_1,\cdots,\beta_n\rangle$.
Then,
\begin{align*}
\tilde{f}\bigl((a_1,(y_1,z_1))*(a_2,(y_2,z_2))\bigr)&=\bigl(f(a_1)f\bigl(\varphi(y_1)(a_2)\bigr),(y_1y_2,z_1z_2)\bigr)\\
 &=\bigl(f(a_1)y_1\bigl(f(a_2)\bigr),(y_1y_2,z_1z_2)\bigr)\\
 &=\bigl(f(a_1),(y_1,z_1)\bigr)*\bigl(f(a_2),(y_2,z_2)\bigr)\\
 &=\tilde{f}\bigl(a_1, (y_1,z_1)\bigr)*\tilde{f}\bigl(a_2, (y_2, z_2)\bigr).
\end{align*}
Thus, $\tilde{f}$ is a homomorphism from $L$ to $M$.
Similarly, we define a homomorphism $\tilde{g}\colon L\to N$ by sending $\bigl(a,(y,z)\bigr)$ to $\bigl(g(a),(y, z)\bigr)$ for all $a\in A$ and $(y, z) \in W$.
Because $f$ and $g$ are embeddings, it follows that $\tilde{f}$ and $\tilde{g}$ are also embeddings.
By Theorem \ref{class of FG has AP}, the class of finite groups has the AP. Thus there exist a finite group $K$ and embeddings
$\tilde{s}\colon M\to K$ and $\tilde{t}\colon N\to K$ such that $\tilde{s}\circ\tilde{f}=\tilde{t}\circ\tilde{g}$.
Let $D=\langle \tilde{s}\bigl((B,\id_W)\bigr), \tilde{t}\bigl((C,\id_W)\bigr)\rangle$. Clearly, $D$ is a finite subgroup of $K$.
Let $\tilde{\delta_i}$ denote $(\tilde{s}\circ\tilde{f})\bigl((1,(\beta_i, \gamma_i))\bigr)\in K$ for $i=1,\cdots, n$.
By the fact that $\tilde{s}\circ\tilde{f}=\tilde{t}\circ\tilde{g}$ and definitions of $\tilde{f}$ and $\tilde{g}$, it follows that for $i=1,\cdots,n$,
\begin{align*}
\tilde{\delta_i}&=(\tilde{s}\circ\tilde{f})\bigl((1,(\beta_i, \gamma_i))\bigr)=(\tilde{t}\circ\tilde{g})\bigl((1,(\beta_i, \gamma_i))\bigr)\\
&=\tilde{s}\bigl((1,(\beta_i, \gamma_i))\bigr)=\tilde{t}\bigl((1,(\beta_i, \gamma_i))\bigr).
\end{align*}
For all $b\in B$ and $i=1,\cdots,n$, we have that
\begin{align}\label{eqtn in the proof of CAP}
\tilde{\delta_i}\tilde{s}\bigl((b,\id_W)\bigr)\tilde{\delta_i}^{-1}&=\tilde{s}\bigl((1,(\beta_i, \gamma_i))\bigr)
\tilde{s}\bigl((b,\id_W)\bigr)\tilde{s}\bigl((1,(\beta_i^{-1}, \gamma_i^{-1}))\bigr)\nonumber\\
&=\tilde{s}\bigl((1,(\beta_i, \gamma_i))*(b,\id_W)*(1,(\beta_i^{-1}, \gamma_i^{-1}))\bigr)\nonumber\\
&=\tilde{s}\bigl((1\cdot\beta_i(b),(\beta_i, \gamma_i))*(1,(\beta_i^{-1}, \gamma_i^{-1}))\bigr)\nonumber\\
&=\tilde{s}\bigl((\beta_i(b),\id_W)\bigr)\in D.
\end{align}
Similarly, for all $c\in C$ and $i=1,\cdots,n$, we have that
$\tilde{\delta_i}\tilde{t}\bigl((c,\id_W)\bigr)\tilde{\delta_i}^{-1}=\tilde{t}\bigl((\gamma_i(c),\id_W)\bigr)\in D$.
Hence, $\tilde{\delta_i}D\tilde{\delta_i}^{-1}\subseteq D$ for $i=1,\cdots,n$, and thus $\tilde{\delta_i}D\tilde{\delta_i}^{-1}=D$ because $D$ is finite.
Then we define $\delta_i\colon D\to D$ by sending $d$ to $\tilde{\delta_i}d\tilde{\delta_i}^{-1}$ for all $d\in D$ and $i=1,\cdots,n$.
Clearly, $\delta_i$ is an automorphism of $D$ for $i=1,\cdots,n$. Let $\mathcal{D}=\langle D, \delta_1\colon D\to D, \cdots, \delta_n\colon D \to D\rangle$.
We define $s\colon B\to D$ by $s(b)=\tilde{s}\bigl((b,\id_W)\bigr)$ for all $b\in B$, and define $t\colon C\to D$ by $t(c)=\tilde{t}\bigl((c,\id_W)\bigr)$ for all $c\in C$.
Then it is easy to verify that $s$ and $t$ are embeddings between groups. For all $b\in B$ and $i=1,\cdots, n$, we have that
$(\delta_i\circ s)(b)=\delta_i\bigl(s(b)\bigr)=\delta_i\Bigl(\tilde{s}\bigl((b,\id_W)\bigr)\Bigr)=\tilde{\delta_i}\tilde{s}\bigl((b,\id_W)\bigr)\tilde{\delta_i}^{-1}$.
By Equation (\ref{eqtn in the proof of CAP}), we have that $(\delta_i\circ s)(b)=\tilde{s}\bigl((\beta_i(b),\id_W)\bigr)=s\bigl(\beta_i(b)\bigr)$,
and thus $\delta_i\circ s=s\circ\beta_i$ for $i=1,\cdots,n$. Similarly, we have $\delta_i\circ t=t\circ\gamma_i$ for $i=1,\cdots,n$.
Hence, $s\colon\mathcal{B}\to\mathcal{D}$ and $t\colon\mathcal{C}\to\mathcal{D}$ are embeddings in $\mathcal{L}^n$.
Then by the fact that $\tilde{s}\circ\tilde{f}=\tilde{t}\circ\tilde{g}$, it follows that $s\circ f=t\circ g$.
Therefore, $\mathcal{L}^n$ has the AP.
\end{proof}

\section{Some properties of $\Aut(\Ha)$}\label{Section Aut(H)}
In this section, we show that $\Ha$ has ample generic automorphisms.
Also, we discuss some algebraic and topological properties of the automorphism group of Hall universal group.

Clearly, the class $\mathcal{FG}$ contains countably many isomorphism types, and contains finite groups of arbitrary size.
Also, it is easy to verify that $\mathcal{FG}$ has the HP and JEP.
By Theorem \ref{class of FG has AP}, $\mathcal{FG}$ has the AP. Thus, $\mathcal{FG}$ is a Fra\"{\i}ss\'{e} class.
The Fra\"{\i}ss\'{e} limit of $\mathcal{FG}$ is Philip Hall's universal locally finite group $\Ha$.
For $\Ha$, we have the following properties:
\begin{fact}
\begin{enumerate}
  \item The group $\Ha$ is simple.
  \item Every finite group can be embedded in $\Ha$.
  \item Any two isomorphic finite subgroups of $\Ha$ are conjugate in $\Ha$.
  \item The theory $\mathrm{Th}(\Ha)$ is not $\omega$-categorical.
\end{enumerate}
\end{fact}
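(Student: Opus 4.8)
My plan is to dispatch the four assertions separately, since they have quite different content; items (2) and (3) should require essentially no work, while (1) and (4) I would derive from them.

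\emph{Items (2) and (3).} Since $\Ha$ is the Fra\"{\i}ss\'{e} limit of $\mathcal{FG}$, its age is $\mathcal{FG}$, and this is precisely the statement that every finite group embeds in $\Ha$; that is (2). For (3) I would simply observe that (2) and (3) together are the pair of properties by which Hall \cite{Hall} characterised $\Ha$ up to isomorphism, so (3) holds by construction, and cite \cite{Hall}. If one instead wants to see (3) from the Fra\"{\i}ss\'{e}-theoretic description, the argument is: given isomorphic finite $A,B\le\Ha$ with an isomorphism $\phi\colon A\to B$, put $C=\langle A,B\rangle$ and use the extension property of the Fra\"{\i}ss\'{e} limit to embed $\mathrm{Sym}(C)$ into $\Ha$ over the image of the right regular representation $C\hookrightarrow\mathrm{Sym}(C)$; inside $\mathrm{Sym}(C)$ the isomorphism $\phi$ between the two subgroups $A,B$ of $C$ is induced by conjugation by an explicit permutation of $C$ (this is essentially the content of \cite[Lemma~1]{Hall}), and that permutation is the inner automorphism of $\Ha$ witnessing conjugacy. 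I would not spell this out and would just cite \cite{Hall}.

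\emph{Item (1).} I would deduce simplicity from (2) and (3). Let $N\trianglelefteq\Ha$ with $N\neq\{1\}$ and fix $a\in N\setminus\{1\}$. Since $\Ha$ is locally finite it suffices to show that $F\subseteq N$ for every finite subgroup $F\le\Ha$. Given such an $F$, set $H=\langle a,F\rangle$, a finite subgroup of $\Ha$, and pick $m\ge 5$ large enough that $H$ embeds in the alternating group $A_m$ (for instance via Cayley's theorem followed by the standard embedding $\mathrm{Sym}(H)\hookrightarrow A_{|H|+2}$). By (2) there is an embedding of $A_m$ into $\Ha$; composing it with $H\hookrightarrow A_m$ produces a finite subgroup of $\Ha$ isomorphic to $H$, which by (3) is conjugate in $\Ha$ to $H$ itself. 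Conjugating the image of $A_m$ accordingly, we obtain a subgroup $A\le\Ha$ with $A\cong A_m$ and $H\subseteq A$; in particular $a\in A$ and $F\subseteq A$. Now $N\cap A\trianglelefteq A$ and $a\in (N\cap A)\setminus\{1\}$, so $N\cap A=A$ because $A_m$ is simple. Hence $F\subseteq A\subseteq N$, as desired, and $N=\Ha$. (Of course, simplicity of $\Ha$ is already proved in \cite{Hall}, which one could cite instead.)

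\emph{Item (4).} I would invoke the Ryll--Nardzewski theorem. If $\mathrm{Th}(\Ha)$ were $\omega$-categorical, there would be only finitely many $1$-types over $\emptyset$. But for each prime $p$, property (2) provides an element of $\Ha$ of order exactly $p$, and for distinct primes $p\neq q$ the formula $x^{p}=1\wedge x\neq 1$ holds of an element of order $p$ and fails of an element of order $q$; hence elements of distinct prime orders realise distinct $1$-types. Since there are infinitely many primes, there are infinitely many $1$-types, a contradiction. Therefore $\mathrm{Th}(\Ha)$ is not $\omega$-categorical. In short, the only non-formal ingredient is Hall's conjugacy property (3) — equivalently, the group-theoretic fact behind \cite[Lemma~1]{Hall} — and, granting (2) and (3), item (1) is the short normal-subgroup argument above and item (4) is a one-line application of Ryll--Nardzewski, so the real work is already done.
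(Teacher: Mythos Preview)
Your proposal is correct and matches the paper's approach: the paper simply cites \cite{Hall} for (1)--(3) and proves (4) by exactly your Ryll--Nardzewski argument (elements of arbitrarily many distinct finite orders give infinitely many $1$-types). The only difference is that you additionally supply a self-contained proof of simplicity from (2) and (3) via embedding into alternating groups, which the paper omits; this extra argument is standard and sound.
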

\begin{proof}
(1)--(3) are proved in \cite{Hall}.

(4) Since every finite group can be embedded in $\Ha$, we have that $\Ha$ has elements of arbitrary finite orders. Hence, $S_1(\mathrm{Th}(\Ha))$ is infinite.
By the Ryll-Nardzewski Theorem, $\mathrm{Th}(\Ha)$ is not $\omega$-categorical.
\end{proof}

Philip Hall's construction of $\Ha$ is different from Fra\"{\i}ss\'{e}'s construction.
Hall's construction relies on some technical lemmas. One of them, say \cite[Lemma 1]{Hall}, implies the following Hrushovski property for the class of finite groups.

\begin{theorem}[Hrushovski property]\label{Hall}
Let $K$ be a subgroup of a finite group $G$, and $\iota\colon k\mapsto k^{\iota}$ be an isomorphism from the group $K$ onto some subgroup $K^{\iota}$ of $G$.
Then we can embed $G$ as a subgroup of some finite group $H$,
and find some $h\in H$ such that conjugation by $h$ sends every $k\in K$ to $k^h=k^{\iota}\in K^{\iota}$.
\qed
\end{theorem}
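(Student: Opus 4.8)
The plan is to realise $\iota$ by conjugation inside the finite symmetric group on the underlying set of $G$, using the left regular representation. Write $\lambda\colon G\to\mathrm{Sym}(G)$ for the left regular representation, $\lambda(g)(x)=gx$; it is faithful, so it is enough to produce some $h\in\mathrm{Sym}(G)$ with $h\,\lambda(k)\,h^{-1}=\lambda(k^{\iota})$ for every $k\in K$. We may then take $H=\mathrm{Sym}(G)$, which is finite because $G$ is, and identify $G$ with $\lambda(G)\le H$.

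The key observation is that such an $h$ is exactly an isomorphism between two actions of $K$ on the set $G$: the action $k\cdot x=kx$ coming from $\lambda$, and the action $k\star x=k^{\iota}x$ coming from $\lambda\circ\iota$. So the second step is to exhibit such an isomorphism. Since $\iota$ is a group isomorphism, $|K^{\iota}|=|K|$ and hence $m:=[G:K]=[G:K^{\iota}]$; fix left transversals $G=\bigsqcup_{i=1}^{m}Kx_i=\bigsqcup_{i=1}^{m}K^{\iota}y_i$. With respect to these, the first action splits as $m$ disjoint copies of the regular $K$-action, one on each coset $Kx_i$, and, transported along $\iota$, so does the second, one on each coset $K^{\iota}y_i$. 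Concretely, define $h\in\mathrm{Sym}(G)$ by $h(kx_i)=k^{\iota}y_i$ for $k\in K$ and $1\le i\le m$; this is a well-defined bijection, since every element of $G$ has a unique expression $kx_i$ and, as $k$ ranges over $K$, the elements $k^{\iota}y_i$ exhaust the coset $K^{\iota}y_i$.

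The last step is the routine check that this $h$ conjugates correctly: for $k'\in K$ and a typical point $kx_i$,
\[
h\bigl(\lambda(k')(kx_i)\bigr)=h\bigl((k'k)x_i\bigr)=(k'k)^{\iota}y_i=(k')^{\iota}\,k^{\iota}y_i=\lambda\bigl((k')^{\iota}\bigr)\bigl(h(kx_i)\bigr),
\]
so $h\,\lambda(k')\,h^{-1}=\lambda\bigl((k')^{\iota}\bigr)$ for all $k'\in K$. Reading this back in $H=\mathrm{Sym}(G)$, with $G$ identified with $\lambda(G)$, conjugation by $h$ sends every $k\in K$ to $k^{\iota}\in K^{\iota}$, which is the assertion (if one uses the convention $k^h=h^{-1}kh$, replace $h$ by $h^{-1}$).

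I do not expect a genuine obstacle: the whole content lies in the first two moves — passing to $\mathrm{Sym}(G)$ and recognising the required $h$ as an isomorphism of $K$-sets. The only subtleties worth a sentence are that $\lambda$ is faithful (so the embedding $G\hookrightarrow H$ is real) and that $h$ is required to intertwine only the two $K$-actions, never any extension of $\iota$ to all of $G$ — which is precisely why the argument goes through without assuming that $\iota$ extends to an automorphism of $G$. This streamlines the argument underlying \cite[Lemma~1]{Hall}, and the simultaneous version for several partial isomorphisms (the Hrushovski property for $\mathcal{FG}$ as used later) follows either by iterating this step or, as in the present paper, from it together with Theorem~\ref{class of FG has AP}.
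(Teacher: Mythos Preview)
Your argument is correct: the paper does not supply its own proof of this statement but simply attributes it to \cite[Lemma~1]{Hall}, and your construction via the left regular representation in $\mathrm{Sym}(G)$ is exactly the classical argument behind that lemma. The only remark is that nothing further is needed for the ``simultaneous'' version used later, since a single partial isomorphism already suffices to make $\mathcal{L}^n$ cofinal in $\mathcal{FG}_p^n$ (one extends the $\psi_i$ one at a time, enlarging the ambient group at each step).
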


\begin{theorem}\label{main theorem}
The automorphism group of Philip Hall's universal locally finite group has ample generics.
\end{theorem}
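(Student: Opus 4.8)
The plan is to invoke the Kechris--Rosendal criterion \cite{KR} in the convenient form due to Truss \cite{Truss} that is recalled above: $\Aut(\Ha)$ has ample generics provided that for every $n$ the class $(\mathcal{FG})_p^n$ has the JEP and the cofinal amalgamation property. Since $\Ha$ is the Fra\"{\i}ss\'{e} limit of the Fra\"{\i}ss\'{e} class $\mathcal{FG}$ of finite groups, it remains only to check these two properties of $(\mathcal{FG})_p^n$, and the CAP is where the work of Section \ref{Section AP} enters.

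First I would dispatch the JEP for $(\mathcal{FG})_p^n$. Given systems $\langle A,\psi_1,\dots,\psi_n\rangle$ and $\langle B,\varphi_1,\dots,\varphi_n\rangle$ in $(\mathcal{FG})_p^n$, the direct product $A\times B$ carries, for each $i$, the partial isomorphism $\psi_i\times\varphi_i$ whose domain is the product of the domains and whose range is the product of the ranges; this gives a single system in $(\mathcal{FG})_p^n$ into which both are embedded via the canonical injections, so the JEP holds. Next I would establish the CAP. The Hrushovski property for $\mathcal{FG}$, which follows from Theorem \ref{Hall}, shows that the subclass $\mathcal{L}^n$ of systems all of whose partial maps are total automorphisms is cofinal in $(\mathcal{FG})_p^n$: starting from $\langle A,\psi_1,\dots,\psi_n\rangle$, apply Theorem \ref{Hall} to $\psi_1$ to embed $A$ in a finite group $A_1$ in which conjugation by some $h_1\in A_1$ realizes $\psi_1$ on its domain; then view $\psi_2,\dots,\psi_n$ as partial isomorphisms of $A_1$ and repeat with $\psi_2$, and so on. After $n$ steps one reaches a finite group $D$ together with elements $h_1,\dots,h_n\in D$, and since conjugation by a fixed group element is always an automorphism of the ambient group, the maps $\delta_i=(d\mapsto h_i d h_i^{-1})$ are automorphisms of $D$ extending the $\psi_i$; thus $\langle D,\delta_1,\dots,\delta_n\rangle\in\mathcal{L}^n$ extends the original system. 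By Theorem \ref{class of FG with automorphisms has AP} the class $\mathcal{L}^n$ has the AP, so $(\mathcal{FG})_p^n$ has a cofinal amalgamating subclass, i.e. the CAP. Together with the JEP this yields ample generics for $\Aut(\Ha)$.

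The genuinely hard part is already behind us: it is Theorem \ref{class of FG with automorphisms has AP}, the amalgamation of finite groups equipped with $n$ distinguished automorphisms, whose proof must push the fibre-product/semidirect-product construction back into the category of finite groups by appealing to Neumann's amalgamation theorem \cite{Neumann} (Theorem \ref{class of FG has AP}) and then verify that the amalgamating element normalizes the amalgam so as to induce bona fide automorphisms. Within the present argument the only point requiring a little care is the iteration of Theorem \ref{Hall}: one must check that extending along $\psi_{i+1}$ does not spoil the automorphisms already produced from $\psi_1,\dots,\psi_i$ --- which holds because each $h_j$ lies in every later overgroup and conjugation by $h_j$ there restricts to the earlier conjugation on the earlier group --- and that the terminal object genuinely lies in $\mathcal{L}^n$ and not merely in $(\mathcal{FG})_p^n$.
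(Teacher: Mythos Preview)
Your proposal is correct and follows essentially the same route as the paper: JEP for $(\mathcal{FG})_p^n$ via direct products, cofinality of $\mathcal{L}^n$ from the Hrushovski property (Theorem \ref{Hall}), AP for $\mathcal{L}^n$ from Theorem \ref{class of FG with automorphisms has AP}, and then the Truss/Kechris--Rosendal criterion. The only difference is that you spell out the iterative application of Theorem \ref{Hall} to pass from $n$ partial isomorphisms to $n$ inner automorphisms, whereas the paper simply asserts cofinality as an immediate consequence of the Hrushovski property; your discussion of that step is sound (conjugation by each $h_j$ in the terminal group $D$ is automatically an automorphism of $D$ and agrees with $\psi_j$ on its domain, which is all that is required).
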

\begin{proof}
%\emph{Proof of Theorem 1.1}:
By Theorem \ref{Hall}, the Fra\"{\i}ss\'{e} class $\mathcal{FG}$ satisfies the Hrushovski property.
Let $\mathcal{L}^n$ denote the class of all systems $\langle G$, $\varphi_1\colon G\to G,\cdots,\varphi_n\colon G\to G\rangle$ for every $n$,
where $G$ is a finite group, and $\varphi_1,\cdots,\varphi_n$ are automorphisms of $G$.
Then $\mathcal{L}^n$ is cofinal in $\mathcal{FG}_p^n$. By Theorem \ref{class of FG with automorphisms has AP}, $\mathcal{FG}_p^n$ has the CAP, and thus has the WAP.
Using direct products, it is easy to verify that $\mathcal{FG}_p^n$ has the JEP. By \cite[Theorem 6.2]{KR}, $\Ha$ has ample generic automorphisms.
\end{proof}

Following Kechris and Rosendal \cite{KR}, we have the following consequences:

\begin{corollary}
\begin{enumerate}
  \item The group $\Aut(\Ha)$ has the small index property, that is, every subgroup of index less than $2^{\aleph_0}$ is open.
  \item The group $\Aut(\Ha)$ is not the union of a countable chain of non-open subgroups.
  \item Every algebraic homomorphism from $\Aut(\Ha)$ into a separable group is automatically continuous, which implies that $\Aut(\Ha)$ has a unique Polish group topology.
\end{enumerate}
\qed
\end{corollary}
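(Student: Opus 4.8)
The plan is to deduce the statement from the Kechris--Rosendal criterion \cite[Theorem 6.2]{KR}, which says that for a Fra\"{\i}ss\'{e} class $\mathcal{K}$ with limit $\mathbf{K}$, the group $\Aut(\mathbf{K})$ has ample generics precisely when $\mathcal{K}_p^n$ has the JEP and the WAP for every $n$. Taking $\mathbf{K}=\Ha$ and $\mathcal{K}=\mathcal{FG}$, it therefore suffices to establish the JEP and the WAP for $\mathcal{FG}_p^n$ for each $n$, using the amalgamation theorem for $\mathcal{L}^n$ (Theorem \ref{class of FG with automorphisms has AP}) and the Hrushovski property for finite groups (Theorem \ref{Hall}) that are already in hand.

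First I would check the JEP for $\mathcal{FG}_p^n$. Given two systems $\langle A,\alpha_1,\dots,\alpha_n\rangle$ and $\langle B,\beta_1,\dots,\beta_n\rangle$ in $\mathcal{FG}_p^n$, one forms the direct product $A\times B$ equipped with the partial isomorphisms $\alpha_i\times\beta_i$, each with domain the product of the domains of $\alpha_i$ and $\beta_i$ and defined coordinatewise; the two coordinate inclusions are then embeddings of the original systems into this common extension, so $\mathcal{FG}_p^n$ has the JEP.

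Next I would upgrade the amalgamation property of $\mathcal{L}^n$ to the cofinal amalgamation property of $\mathcal{FG}_p^n$. The point is that $\mathcal{L}^n$ is cofinal in $\mathcal{FG}_p^n$, which follows by iterating Theorem \ref{Hall}: starting from $\langle A,\psi_1,\dots,\psi_n\rangle\in\mathcal{FG}_p^n$, apply Theorem \ref{Hall} to $\psi_1$ to embed $A$ in a finite group $H_1$ together with $h_1\in H_1$ whose conjugation action extends $\psi_1$; the remaining $\psi_i$ persist as partial isomorphisms of subgroups of $H_1$, and since $h_1$ lies in $H_1$ its conjugation action stays a total (inner) automorphism under any further embedding. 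Repeating this $n$ times yields a finite group $D\supseteq A$ carrying inner automorphisms $\varphi_1,\dots,\varphi_n$ of $D$ that extend $\psi_1,\dots,\psi_n$, i.e.\ $\langle D,\varphi_1,\dots,\varphi_n\rangle\in\mathcal{L}^n$ extends the given system. Since $\mathcal{L}^n$ has the AP by Theorem \ref{class of FG with automorphisms has AP}, this cofinal subclass with the AP is exactly the CAP for $\mathcal{FG}_p^n$, and the CAP implies the WAP. With the JEP and the WAP now available for every $n$, \cite[Theorem 6.2]{KR} yields that $\Aut(\Ha)$ has ample generics.

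The substantive obstacle in this argument is not encountered at this stage: it has already been overcome in Theorem \ref{class of FG with automorphisms has AP}, where one must amalgamate finite groups carrying $n$ automorphisms compatible with the given embeddings, the difficulty being that a naive amalgam of the underlying groups need not admit automorphisms extending both sides. Everything remaining is routine: the direct-product construction for the JEP, and the observation that the one-map Hrushovski property of Theorem \ref{Hall} iterates to the $n$-map version because inner automorphisms remain inner under embeddings.
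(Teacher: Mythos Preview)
Your argument is correct and follows exactly the route the paper takes: the paper proves ample generics (Theorem \ref{main theorem}) via the Kechris--Rosendal criterion by checking the JEP for $\mathcal{FG}_p^n$ with direct products and the CAP via the Hrushovski property plus Theorem \ref{class of FG with automorphisms has AP}, and then records the three consequences as an immediate corollary of \cite{KR}. One small presentational gap: you stop at ``$\Aut(\Ha)$ has ample generics'' without explicitly invoking the general results of \cite{KR} (e.g.\ Theorems 6.9, 6.12, 6.24 there) that pass from ample generics to the small index property, the non-existence of a countable chain of non-open subgroups, and automatic continuity; that final citation is what the corollary actually records.
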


%\newpage

\begin{theorem}\label{prop}
\begin{enumerate}
  \item The locally finite subgroup $\Inn(\Ha)\cong\Ha$ is dense in $\Aut(\Ha)$.
  \item The group $\Aut(\Ha)$ is not simple.
  \item The group $\Aut(\Ha)$ is a complete group.
\end{enumerate}
\end{theorem}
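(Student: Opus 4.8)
The three statements are essentially independent, and parts (1) and (2) follow quickly from the tools already in place. For (1), I would show that $\Inn(\Ha)$ meets every basic open set of $\Aut(\Ha)$. A basic neighbourhood of $\tau\in\Aut(\Ha)$ has the form $U=\{\sigma:\sigma\restriction_F=\tau\restriction_F\}$ with $F\subseteq\Ha$ finite; replacing $F$ by the finite subgroup $K=\langle F\rangle$, it suffices to find $h\in\Ha$ with $hkh^{-1}=\tau(k)$ for all $k\in K$. Choose a finite subgroup $G\le\Ha$ containing $K\cup\tau(K)$ and apply Theorem~\ref{Hall} to the inclusion $K\le G$ and the isomorphism $\tau\restriction_K\colon K\to\tau(K)\le G$; this gives a finite group $H\supseteq G$ and $h\in H$ with $hkh^{-1}=\tau(k)$ on $K$. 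By the standard extension property of Fra\"{\i}ss\'{e} limits (which follows from ultrahomogeneity together with $\mathrm{Age}(\Ha)=\mathcal{FG}$) we may take $H$ to be a subgroup of $\Ha$ with $G\le H$ the original inclusion, and then $\iota_h\in\Inn(\Ha)\cap U$. Finally $Z(\Ha)=1$ since $\Ha$ is an infinite simple group, so $\Inn(\Ha)\cong\Ha$, which is locally finite. For (2): $\Inn(\Ha)$ is a normal subgroup of $\Aut(\Ha)$ isomorphic to $\Ha\neq 1$, hence nontrivial, and it is proper by a cardinality count, since $\Aut(\Ha)$ is uncountable while $\Inn(\Ha)\cong\Ha$ is countable. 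Uncountability holds because $\Aut(\Ha)$ is a non-discrete Polish group: for each finite $G_0\le\Ha$ one produces, just as in (1), a nontrivial inner automorphism fixing $G_0$ pointwise (for instance realize a copy of $G_0\times S_3$ inside $\Ha$ with first factor the given $G_0$, and transport the automorphism $\mathrm{id}_{G_0}\times(\text{conjugation by a transposition})$), so no finite tuple of $\Ha$ has trivial pointwise stabilizer; a non-discrete Polish group has no isolated points, hence has size continuum. Thus $\Aut(\Ha)$ has a proper nontrivial normal subgroup and is not simple.

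For (3), recall that a group is \emph{complete} iff it has trivial centre and all of its automorphisms are inner. Triviality of $Z(\Aut(\Ha))$ is immediate: if $\sigma$ is central then $\iota_{\sigma(g)}=\sigma\iota_g\sigma^{-1}=\iota_g$ for every $g\in\Ha$, so $\sigma(g)g^{-1}\in Z(\Ha)=1$ and $\sigma=\mathrm{id}$. For the remaining assertion, fix $\Phi\in\Aut(\Aut(\Ha))$. The plan has four steps. (i) Show that $\Inn(\Ha)$ is \emph{characteristic} in $\Aut(\Ha)$. (ii) Granting (i), $\Phi$ restricts to an automorphism of $\Inn(\Ha)$; transporting along the isomorphism $g\mapsto\iota_g$ (available since $Z(\Ha)=1$), there is $\phi\in\Aut(\Ha)$ with $\Phi(\iota_g)=\iota_{\phi(g)}$ for all $g$. (iii) Let $c_a$ denote conjugation by $a$ in $\Aut(\Ha)$; since $c_{\phi^{-1}}(\iota_g)=\phi^{-1}\iota_g\phi=\iota_{\phi^{-1}(g)}$, the automorphism $\Psi:=\Phi\circ c_{\phi^{-1}}$ fixes $\Inn(\Ha)$ pointwise. (iv) Since $\Aut(\Ha)$ has ample generics (Theorem~\ref{main theorem}), every homomorphism from it into a separable group is continuous; in particular $\Psi$ is continuous, and a continuous self-map of the Hausdorff group $\Aut(\Ha)$ fixing the dense subgroup $\Inn(\Ha)$ (part (1)) pointwise must be the identity. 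Hence $\Psi=\mathrm{id}$, so $\Phi=c_\phi$ is inner. Together with triviality of the centre this shows $\Aut(\Ha)$ is complete.

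The main obstacle is step (i). I would attack it by giving a purely group-theoretic description of $\Inn(\Ha)$ inside $\Aut(\Ha)$. Since $\Aut(\Ha)$ has the small index property (a consequence of ample generics), a subgroup is open iff it has index $<2^{\aleph_0}$, so $S:=\{\sigma\in\Aut(\Ha):[\Aut(\Ha):C_{\Aut(\Ha)}(\sigma)]<2^{\aleph_0}\}$ is invariant under every automorphism of $\Aut(\Ha)$. For $g\in\Ha$ one has $C_{\Aut(\Ha)}(\iota_g)=\{\tau:\tau(g)=g\}$, which is open of countable index, so $\Inn(\Ha)\subseteq S$; the substantial point is the reverse inclusion $S\subseteq\Inn(\Ha)$. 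If $C_{\Aut(\Ha)}(\sigma)$ is open it contains the pointwise stabilizer $\mathrm{Stab}(G_0)$ of some finite subgroup $G_0\le\Ha$, so $\sigma\in C_{\Aut(\Ha)}(\mathrm{Stab}(G_0))$, and one is reduced to the ``no algebraicity'' computation $C_{\Aut(\Ha)}(\mathrm{Stab}(G_0))=\{\iota_g:g\in G_0\}$. In turn this rests on the fact that finite subgroups $P\le\Ha$ are definably closed, $\mathrm{dcl}(P)=P$ --- which I would derive from ultrahomogeneity, Theorem~\ref{Hall}, and residual finiteness of amalgamated free products of finite groups --- from which one first gets that $\sigma$ fixes $C_\Ha(G_0)$ pointwise and stabilizes every finite subgroup containing $G_0$, and then that $\sigma=\iota_g$ for some $g\in G_0$. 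I expect this reconstruction-type statement to be the technical heart of the argument; the rest of part (3) is bookkeeping of the kind already carried out in Section~\ref{Section AP}.
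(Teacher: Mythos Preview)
Your arguments for (1) and (2) are correct and match the paper's in spirit; the paper is terser, invoking directly that isomorphic finite subgroups of $\Ha$ are conjugate (so $\Inn(\Ha)$ and $\Aut(\Ha)$ have the same orbits on tuples, hence $\Inn(\Ha)$ is dense) and simply asserting that $\Aut(\Ha)$ is uncountable.

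For (3), however, you take a very different and much harder route than necessary. The paper just cites the classical theorem of Burnside (Robinson, \emph{A Course in the Theory of Groups}, Theorem~13.5.10) that $\Aut(G)$ is complete for \emph{every} nonabelian simple group $G$; nothing about $\Ha$, Polish topologies, or ample generics is used. In particular your ``main obstacle'', step~(i), has a two-line purely algebraic solution: from $Z(\Ha)=1$ one gets $C_{\Aut(\Ha)}(\Inn(\Ha))=1$ by exactly the computation you gave for the centre; now for any $\Phi\in\Aut(\Aut(\Ha))$ the image $\Phi(\Inn(\Ha))\trianglelefteq\Aut(\Ha)$ is simple, so $\Inn(\Ha)\cap\Phi(\Inn(\Ha))$ is either trivial or all of $\Inn(\Ha)$; in the first case the two normal subgroups commute, forcing $\Phi(\Inn(\Ha))\le C_{\Aut(\Ha)}(\Inn(\Ha))=1$, which is absurd, and the second case (applied also to $\Phi^{-1}$) gives $\Phi(\Inn(\Ha))=\Inn(\Ha)$. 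Your step~(iv) likewise needs no topology: if $\Psi$ fixes $\Inn(\Ha)$ pointwise then for every $\sigma\in\Aut(\Ha)$ and $g\in\Ha$ one has $\Psi(\sigma)\,\iota_g\,\Psi(\sigma)^{-1}=\Psi(\iota_{\sigma(g)})=\iota_{\sigma(g)}=\sigma\,\iota_g\,\sigma^{-1}$, whence $\sigma^{-1}\Psi(\sigma)\in C_{\Aut(\Ha)}(\Inn(\Ha))=1$. Thus the descriptive characterisation $S=\Inn(\Ha)$ that you propose---and explicitly leave unresolved---is not needed, and the whole of part~(3) is a short, purely algebraic argument.
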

\begin{proof}
(1) Since any two isomorphic finite subgroups of $\Ha$ are conjugate in $\Ha$, we have that $\Inn(\Ha)$ and $\Aut(\Ha)$ have the same orbits acting on $\Ha^n$.
By \cite[Lemma 4.1.5(d)]{Hodges}, $\Inn(\Ha)$ is dense in $\Aut(\Ha)$.

(2) Clearly, $\Aut(\Ha)$ is uncountable. Since $\Ha\cong\Inn(\Ha)$ is countable, $\Inn(\Ha)$ is a nontrivial normal subgroup of $\Aut(\Ha)$, and thus $\Aut(\Ha)$ is not simple.

(3) Since $\Ha$ is a nonabelian simple group, by a result of Burnside in group theory (see \cite[Theorem 13.5.10]{Robinson}), $\Aut(\Ha)$ is a complete group, that is, $Z(\Aut(\Ha))=1$, and $\Aut(\Aut(\Ha))=\Inn(\Aut(\Ha))\cong\Aut(\Ha)$.
\end{proof}

\begin{remark}
 \begin{enumerate}
  \item Kechris and Rosendal \cite{KR} showed that for a locally finite Fra\"{\i}ss\'{e} structure $\mathbf{K}$ if $\Aut(\mathbf{K})$ has a dense locally finite subgroup, then $\mathrm{Age}(\mathbf{K})$ has the Hrushovski property. Thus, Theorem \ref{prop}(1) provides another way to show that $\mathcal{FG}$ has the Hrushovski property.
  \item Macpherson and Tent \cite{MT} gave a sufficient condition for automorphism groups of countable homogeneous structures to be simple. Note that $\Aut(\Ha)$ does not satisfy their sufficient condition. Specifically, $\Aut(\Ha)$ is not transitive on $\Ha$ since $\Ha$ has elements of different orders. Also, the class of finite groups does not have the free amalgamation property.
 \end{enumerate}
\end{remark}

\section{Powers and roots of generic automorphisms}\label{Section Powers}
Rosendal \cite{Rosendal} has proved that the generic measure preserving homeomorphism of the Cantor space and
the generic isometry of the rational Urysohn metric space are conjugate to all of their powers.
In this section, we will show that every generic automorphism of Hall universal locally finite group is conjugate to all of its powers, and hence it has roots of all orders.

\begin{lemma}\label{fg=gf}
Let $A\leq B$ be two finite groups, $\alpha$ an automorphism of $A$ and $\beta$ an automorphism of $B$ leaving $A$ invariant. If $\alpha\bigl(\beta(x)\bigr)=\beta\bigl(\alpha(x)\bigr)$ for all $x\in A$, then there is a finite group $C\geq B$ and $f,g\in C$ satisfying: (i) $fg=gf$; (ii) $x^f=\alpha(x)$ for all $x\in A$; (iii) $y^g=\beta(y)$ for all $y\in B$.
\end{lemma}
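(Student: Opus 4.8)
The plan is to build $C$ as a group of the form $B \rtimes W$, where $W$ is a carefully chosen finite group generated by two commuting elements whose images will realize the conjugations $\alpha$ and $\beta$. Concretely, let me first observe that the hypotheses say $\beta$ restricts to an automorphism $\beta\restriction_A$ of $A$, and $\alpha$ commutes with $\beta\restriction_A$ inside $\Aut(A)$. I would like $g$ to conjugate by $\beta$ on all of $B$ and $f$ to conjugate by $\alpha$ on $A$, with $f$ and $g$ commuting. The difficulty is that $\alpha$ is only defined on $A$, so there is no canonical way to make $f$ act on $B$; I need to adjoin it abstractly while keeping the commutation relation $fg=gf$ and keeping the group finite.

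The key step is to set up the acting group. Let $X = \langle \alpha, \beta\restriction_A\rangle \leq \Aut(A)$; by hypothesis this is a finite abelian group (it is a quotient of $\Z^2$ but finite since $\Aut(A)$ is finite). Let $Y = \langle \beta \rangle \leq \Aut(B)$, also finite, and let $\rho\colon Y \to \Aut(A)$ be the restriction homomorphism $\beta^k \mapsto \beta^k\restriction_A$, whose image lies in $X$. Now form $W = X \times_{\langle\beta\restriction_A\rangle} Y$, the fiber product of the projection $X \to \langle \beta\restriction_A\rangle$ (which exists since $X$ is abelian, so $\langle\beta\restriction_A\rangle$ is a direct factor — or, more safely, I take $W = \{(x,\beta^k) \in X\times Y : x \text{ and }\beta^k\restriction_A \text{ agree}\}$ after adjusting so the two components are compatible). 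The cleanest route: let $W = X \times Y$ and let $W$ act on $A$ via $(x,\beta^k)\cdot a = x(a)$ — no, this does not respect the action on $B$. Let me instead take $W$ to be generated by two commuting symbols $\sigma, \tau$ with $\sigma$ of order $= \mathrm{ord}(\alpha)$ and $\tau$ of order $= \mathrm{ord}(\beta)$, i.e. $W = \langle \sigma\rangle \times \langle\tau\rangle$ abelian; define an action of $W$ on $B$ by $\tau \mapsto \beta$ and $\sigma \mapsto$ the identity on $B$, and an action of $W$ on $A$ by $\tau \mapsto \beta\restriction_A$ and $\sigma \mapsto \alpha$. These two actions are compatible on $A$ precisely because $\alpha$ commutes with $\beta\restriction_A$ and both restrict correctly. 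The issue that $\sigma$ acts trivially on $B$ but nontrivially on $A \subseteq B$ means $B \rtimes W$ does not contain $A$ compatibly — so this naive semidirect product fails.

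To repair this, I follow the amalgamation-style trick from the proof of Theorem~\ref{class of FG with automorphisms has AP}. First form $B \rtimes Y = B \rtimes \langle\beta\rangle$ (an honest semidirect product using the action of $\beta$ on $B$); inside it, $A$ sits as a subgroup and the element $g_0 := (1,\beta)$ satisfies $y^{g_0} = \beta(y)$ for all $y\in B$. Now inside $B\rtimes Y$ I have the subgroup $A$ with the automorphism $\alpha$, and I also have $\mathrm{ad}_{g_0}\restriction_A = \beta\restriction_A$ commuting with $\alpha$. So I can apply Lemma~\ref{fg=gf} recursively? No — that is circular. Instead I apply the Hrushovski property (Theorem~\ref{Hall}) to realize $\alpha$ as a conjugation: embed $B \rtimes Y$ into a finite group $C_1$ with an element $f \in C_1$ such that $a^f = \alpha(a)$ for all $a\in A$ — but this does not yet give $fg_0 = g_0 f$. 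The commutation is the main obstacle. I handle it by passing to $\langle f, g_0\rangle$ and noting $[f,g_0]$ acts trivially on $A$ (since $\alpha$ and $\beta\restriction_A$ commute on $A$, $\mathrm{ad}_{[f,g_0]}$ fixes $A$ pointwise), then amalgamating $C_1$ with itself over $\langle f, g_0, A\rangle$ in a way that forces $[f,g_0]=1$ — more precisely, I use that in the free product with amalgamation, or rather I use Theorem~\ref{Hall} a second time on a suitably chosen pair to straighten out the relation. The cleanest packaging: build $C$ directly as $(B \rtimes \langle\tau\rangle) \rtimes \langle\sigma\rangle$-type iterated construction won't close up, so the honest plan is: (1) realize $\beta$ by conjugation in $B \rtimes \langle\beta\rangle$; (2) realize $\alpha$ by conjugation in a further extension via Theorem~\ref{Hall}; (3) use the commutation hypothesis plus one more application of Theorem~\ref{Hall} (applied to the isomorphism of the finite subgroup $\langle A, f, g_0\rangle$ that sends $[f,g_0] \mapsto 1$ and is identity elsewhere — which is well-defined as a partial isomorphism because $[f,g_0]$ centralizes $\langle A, g_0\rangle$ by construction) to kill the commutator while keeping conditions (ii) and (iii). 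I expect step (3) — verifying that the map killing $[f,g_0]$ really is a partial isomorphism of finite groups and survives the embedding — to be the delicate point, and it is exactly where the hypothesis $\alpha\beta = \beta\alpha$ on $A$ gets used.
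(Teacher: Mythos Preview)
Your step~(3) does not work, and the problem is structural rather than technical. Theorem~\ref{Hall} takes an isomorphism between two subgroups and realizes it as conjugation in a larger finite group; conjugation is always injective, so it can never send a nontrivial element to~$1$. The map you describe---fixing $A$, $f$, $g_0$ and sending $[f,g_0]\mapsto 1$---is not an isomorphism between subgroups at all, but a quotient map, and Hall's lemma has nothing to say about quotients. (Incidentally, your claim that $[f,g_0]$ centralizes $\langle A,g_0\rangle$ is only half right: the commutation hypothesis does give $[f,g_0]\in C_{C_1}(A)$, but there is no reason $[f,g_0]$ should commute with $g_0$.) So the plan of first producing $f,g_0$ by two applications of Theorem~\ref{Hall} and then straightening out the commutator afterwards cannot be salvaged along these lines.

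You were in fact much closer with the idea you abandoned: take $W=\langle\sigma\rangle\times\langle\tau\rangle$ abelian, let it act on $A$ via $\sigma\mapsto\alpha$, $\tau\mapsto\beta\restriction_A$ (well-defined exactly because $\alpha$ and $\beta\restriction_A$ commute), and form $G=A\rtimes W$. Inside $G$ the images $f=(1,\sigma)$ and $g=(1,\tau)$ already commute and already conjugate $A$ correctly. You rejected this because extending the action of $\sigma$ to $B$ is impossible---and you are right, $B\rtimes W$ makes no sense. But you do not need it: the subgroup $A\rtimes\langle\tau\rangle\leq G$ is isomorphic to the subgroup $A\rtimes\langle\beta\rangle\leq B'=B\rtimes\langle\beta\rangle$, and one application of the amalgamation property for finite groups (Theorem~\ref{class of FG has AP}, not Theorem~\ref{Hall}) glues $G$ and $B'$ along this common subgroup into a finite $C$. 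In $C$ the element $f$ still commutes with $g$ (this held in $G$), $f$ still acts as $\alpha$ on $A$ (this held in $G$), and $g$ now acts as $\beta$ on all of $B$ (this held in $B'$). This is precisely the paper's argument: build the commuting pair first over $A$, then amalgamate in $B$---rather than building over $B$ first and trying to force commutation afterwards.
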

\begin{proof}
 Consider semidirect products $A'=A\rtimes\langle\alpha\rangle$ and $B'=B\rtimes\langle\beta\rangle$. We may assume that $A\leq A'$ and $B\leq B'$ and let $f\in A'$ and $g\in B'$ satisfy that $x^f=\alpha(x)$ for all $x\in A$ and $y^g=\beta(y)$ for all $y\in B$.
 Clearly, $A'=A\rtimes\langle f\rangle=\langle A,f\rangle$ and $B'=B\rtimes\langle g\rangle=\langle B,f\rangle$.
 Let $H=\langle f\rangle\times\langle g\rangle$.
 We define $\varphi\colon H\to\Aut(A)$ by $\varphi(f^mg^n)(x)=x^{f^mg^n}$ for all $m,n\in\Z$, $x\in A$.
 By the assumption of $\alpha$ and $\beta$, we have that $x^{fg}=x^{gf}$ for all $x\in A$. Thus, $\varphi$ is well-defined and is homomorphic.
 Define $G=A\rtimes_{\varphi} H$, the semidirect product of $A$ and $H$ with respect to $\varphi$.
 Canonically, there are embeddings $\psi_1\colon A\rtimes\langle f\rangle\to G$ and $\psi_2\colon A\rtimes\langle g\rangle\to G$.
 We may assume that $G=\langle A, f,g\rangle$.
 Since $\iota\colon A\rtimes\langle g\rangle\hookrightarrow B'=B\rtimes\langle g\rangle$, by Theorem \ref{class of FG has AP} (Amalgamation Property)
 there is a finite group $C$  and embeddings $\rho\colon G\to C$ and $\sigma\colon B'\to C$ such that $\rho\circ\psi_2=\sigma\circ\iota$.
 We may assume that $C\geq G$ and $C\geq B'$. Then $f, g\in C$ and $C,f,g$ are desired.
\end{proof}

\begin{prop}\label{taking roots}
Let $A\leq B$ be two finite groups, $\alpha$ an automorphism of $A$, and $\beta$ an automorphism of $B$ leaving $A$ invariant such that $\alpha^n=\beta\restriction_A$ for some $n\geq 1$. Then there is a finite group $C\geq B$ and an automorphism $\gamma$ of $C$ extending $\alpha$ such that $\gamma^n\restriction_B=\beta$.
\end{prop}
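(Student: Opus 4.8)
The plan is to manufacture $C$ by adjoining a formal $n$-th root of the automorphism $\beta$ and then amalgamating it with $\alpha$ over $A$. Let $m$ be the order of $\beta$ in $\Aut(B)$. Since $\alpha^n=\beta\restriction_A$ and $\beta^m=\mathrm{id}$, we get $\alpha^{nm}=(\beta\restriction_A)^m=\mathrm{id}_A$, so $\mathrm{ord}(\alpha)\mid nm$. Form the finite groups
$$B_1=B\rtimes\langle t\rangle,\qquad D_1=A\rtimes\langle s\rangle,$$
where $\langle t\rangle$ is cyclic of order $m$ acting on $B$ by $\beta$, and $\langle s\rangle$ is cyclic of order $nm$ acting on $A$ by $\alpha$ (well defined precisely because $\mathrm{ord}(\alpha)\mid nm$). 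Inside $B_1$ the subgroup $A\langle t\rangle$ is $A\rtimes\langle t\rangle$, with $t$ acting on $A$ as $\beta\restriction_A$ (here $A$ is $t$-invariant since $\beta(A)=A$). Inside $D_1$ the element $s^n$ has order $m$ and acts on $A$ as $\alpha^n=\beta\restriction_A$, so $A\langle s^n\rangle$ is $A\rtimes\langle s^n\rangle$. Hence there is an isomorphism $A\langle t\rangle\to A\langle s^n\rangle$ that is the identity on $A$ and sends $t\mapsto s^n$.

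Next I would feed the span $B_1\hookleftarrow A\langle t\rangle\cong A\langle s^n\rangle\hookrightarrow D_1$ into Theorem \ref{class of FG has AP}, getting a finite group $C$ together with embeddings of $B_1$ and of $D_1$ into $C$ agreeing on the common subgroup. Identifying $B$, $s$, $t$ with their images, we have $B\le C$, an element $s\in C$, and the relations $s^n=t$, $x^s=\alpha(x)$ for all $x\in A$, and $y^t=\beta(y)$ for all $y\in B$. Define $\gamma\in\Aut(C)$ to be conjugation by $s$, i.e. $\gamma(c)=s^{-1}cs$. Then $\gamma\restriction_A=\alpha$, so in particular $\gamma(A)=A$ and $\gamma$ extends $\alpha$; and $\gamma^n\restriction_B$ is conjugation by $s^n=t$ restricted to $B$, which is $\beta$. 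Thus $C\ge B$ and $\gamma$ are as required.

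The steps that need genuine care are the bookkeeping ones: checking that $s^n$ really has order $m$ in $D_1$, so the two copies of $A\rtimes_{\beta\restriction_A}(\text{cyclic of order }m)$ used as the amalgamation base are isomorphic via a map fixing $A$ pointwise; and keeping track of the identifications coming out of the amalgamation, so that $B$, $s$, and $t=s^n$ all live in the one group $C$. I expect the main (and fairly mild) obstacle to be making the cyclic-group arithmetic — $\mathrm{ord}(\alpha)\mid nm$ and $\mathrm{ord}(s^n)=m$ — completely airtight; once the amalgamation base is set up correctly, the amalgamation property for finite groups does all the substantive work. One could instead start from Lemma \ref{fg=gf} to get commuting $f,g$ with $x^f=\alpha(x)$ and $y^g=\beta(y)$ — its hypotheses hold here since $\alpha\circ(\beta\restriction_A)=\alpha^{n+1}=(\beta\restriction_A)\circ\alpha$ — and then amalgamate once more to force $f^n=g$; but the construction above does not need it.
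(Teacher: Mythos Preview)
Your argument is correct. The cyclic-group arithmetic you flag is routine: since $\mathrm{ord}(\alpha)\mid nm$ the action of $\langle s\rangle\cong\Z/nm\Z$ on $A$ via $\alpha$ is well defined, and in $D_1$ the element $s^n$ has order exactly $nm/n=m$ and acts on $A$ as $\alpha^n=\beta\restriction_A$; hence $A\langle t\rangle$ and $A\langle s^n\rangle$ are both internal semidirect products $A\rtimes(\text{cyclic of order }m)$ with the same action, and the map fixing $A$ and sending $t\mapsto s^n$ is an isomorphism. One amalgamation then yields $C$ and the inner automorphism $\gamma(c)=s^{-1}cs$ does what is required.

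The paper proceeds quite differently. It first invokes Lemma \ref{fg=gf} to obtain a finite $D\ge B$ together with commuting elements $f,g\in D$ realising $\alpha$ on $A$ and $\beta$ on $B$ by conjugation. It then takes $C=D^n$, embeds $B$ via the ``twisted diagonal'' $x\mapsto(x,x^f,\ldots,x^{f^{n-1}})$, and lets $\gamma$ be the cyclic shift $(z_1,\ldots,z_n)\mapsto(z_2,\ldots,z_n,z_1^g)$; the commutation $fg=gf$ is exactly what makes $\gamma^n$ restrict to $\beta$ on the image of $B$. So the paper trades a second amalgamation for an explicit wreath-type construction, at the cost of needing Lemma \ref{fg=gf}. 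Your route bypasses Lemma \ref{fg=gf} entirely and uses only the amalgamation property for finite groups, which is arguably cleaner; the paper's route, on the other hand, produces $C$ and $\gamma$ completely explicitly once $D,f,g$ are in hand.
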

\begin{proof}
By Lemma \ref{fg=gf}, there is $D\geq B$ and $f,g\in D$ such that $fg=gf$, $x^f=\alpha(x)$ for all $x\in A$, and $y^g=\beta(y)$ for all $y\in B$.
Consider $C=\underbrace{D\times\cdots\times D}_{n}$.
Define $\varphi\colon D\to C$ by $x\mapsto\bigl(x,x^f,\cdots,x^{f^{n-1}}\bigr)$. Clearly, $\varphi$ is an embedding.
Let $\overline{A}=\varphi(A)$ and $\overline{B}=\varphi(B)$, the images of $A\leq B$ in $C$.
Define $\overline{\alpha}\colon\overline{A}\to\overline{A}$ by $\overline{\alpha}(z)=\varphi\bigl(\alpha(\varphi^{-1}(z))\bigr)$ for all $z\in\overline{A}$,
and $\overline{\beta}\colon\overline{B}\to\overline{B}$ by $\overline{\beta}(z)=\varphi\bigl(\beta(\varphi^{-1}(z))\bigr)$ for all $z\in\overline{B}$.
Clearly, $\overline{\alpha}\in\Aut(\overline{A})$ and $\overline{\beta}\in\Aut(\overline{B})$.
Define $\gamma\colon C\to C$ by $\gamma\bigl((z_1,z_2,\cdots,z_{n-1},z_n)\bigr)=(z_2,z_3,\cdots,z_n,z_1^g)$.
Clearly, $\gamma\in\Aut(C)$.
For all $x\in A$, one has that
\begin{align*}
 \gamma(\varphi(x))&=\gamma(x,x^f,\cdots,x^{f^{n-1}})=\bigl(x^f,x^{f^2},\cdots,x^g\bigr)\\
&=\bigl(x^f,x^{f^2},\cdots,x^{f^n}\bigr)=\overline{\alpha}(\varphi(x)).
\end{align*}
For all $y\in B$, one has that
\begin{align*}
\gamma^n(\varphi(y))&=\gamma^n(y,y^f,\cdots,y^{f^{n-1}})=\bigl(y^g,y^{fg},\cdots,y^{f^{n-1}g}\bigr)\\
&=\bigl(y^g,y^{gf},\cdots,y^{gf^{n-1}}\bigr)=\varphi(y^g)=\varphi(\beta(y))=\overline{\beta}(\varphi(y)).
\end{align*}
The key point is the fact that $fg=gf$. This $C$ and $\gamma$ is as desired.
\end{proof}

\begin{remark}
Rosendal \cite{Rosendal} proved the counterparts of Proposition \ref{taking roots} for equidistributed finite boolean algebras and finite rational metric spaces.
His proofs were built on the free amalgamation property (FAP) of the classes he considered, but the class of finite groups does not have FAP.
Our proof is adaption of wreath product and the mapping $\gamma$ was inspired by Haipeng Qu.
\end{remark}

Now, we mimic Rosendal's proof of \cite[Theorem 12]{Rosendal} to prove that every generic automorphism of Hall universal group $\Ha$ is conjugate to all of its powers,
and hence has roots of all orders.

\begin{theorem}
 Every generic automorphism of Hall universal group is conjugate to its nth power for every $n\geq 1$.
\end{theorem}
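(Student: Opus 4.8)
The plan is to prove that $g^n$ is again a generic automorphism of $\Ha$; since the comeager conjugacy class is a single orbit of the conjugation action, it then contains both $g$ and $g^n$, which are therefore conjugate. Because $\mathcal{FG}$ has the Hrushovski property (Theorem~\ref{Hall}) and $\mathcal{L}^1$ has the amalgamation property (Theorem~\ref{class of FG with automorphisms has AP}), $\mathcal{L}^1$ is a Fra\"{\i}ss\'{e} class, and by Truss~\cite{Truss} and Kechris--Rosendal~\cite{KR} an automorphism $g$ of $\Ha$ is generic if and only if $(\Ha,g)$ is its Fra\"{\i}ss\'{e} limit: (a) the finite $g$-invariant subgroups of $\Ha$ are cofinal among its finite subgroups and their union is $\Ha$; (b) $\mathrm{Age}(\Ha,g)=\mathcal{L}^1$; and (c) for every finite $g$-invariant $P\leq\Ha$ and every embedding $(P,g{\restriction}_P)\hookrightarrow(R,\rho)$ in $\mathcal{L}^1$ there is an embedding $(R,\rho)\hookrightarrow(\Ha,g)$ which is the inclusion on $P$. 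I will verify (a), (b) and (c) for $(\Ha,g^n)$.

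Property (a) for $g^n$ is immediate, since finite $g$-invariant subgroups are $g^n$-invariant and are already cofinal; and (b) for $g^n$ follows from (c), since taking $P$ trivial in (c) shows that every member of $\mathcal{L}^1$ embeds into $(\Ha,g^n)$. So it suffices to prove (c), which is the real content of the argument; I expect the main obstacle to be that a finite $g^n$-invariant subgroup of $\Ha$ need not be $g$-invariant, so (c) for $g$ cannot be applied to it directly. Let $P\leq\Ha$ be finite and $g^n$-invariant and let $(P,g^n{\restriction}_P)\hookrightarrow(Q,\sigma)$ be an embedding in $\mathcal{L}^1$, which I may take to be an inclusion $P\leq Q$ with $\sigma{\restriction}_P=g^n{\restriction}_P$. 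Pass to the $g$-invariant closure $P^\sharp=\langle g^j(P):j\in\Z\rangle$, which by (a) for $g$ is a finite $g$-invariant subgroup of $\Ha$, and put $\pi:=g{\restriction}_{P^\sharp}\in\Aut(P^\sharp)$; then $\pi^n=g^n{\restriction}_{P^\sharp}$ restricts on $P$ to $g^n{\restriction}_P$. Since $(P,g^n{\restriction}_P)$ embeds in $\mathcal{L}^1$ into both $(P^\sharp,\pi^n)$ and $(Q,\sigma)$, and these embeddings agree on $P$, Theorem~\ref{class of FG with automorphisms has AP} provides $(B,\beta)\in\mathcal{L}^1$ amalgamating them over $(P,g^n{\restriction}_P)$; identify $P^\sharp\leq B$ (so $\beta{\restriction}_{P^\sharp}=\pi^n$) and $P\leq Q\leq B$ (so $\beta{\restriction}_Q=\sigma$). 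Now apply Proposition~\ref{taking roots} to $P^\sharp\leq B$ with the automorphisms $\pi$ of $P^\sharp$ and $\beta$ of $B$ (the hypothesis $\beta{\restriction}_{P^\sharp}=\pi^n$ holds), obtaining a finite group $C\geq B$ and $\gamma\in\Aut(C)$ with $\gamma{\restriction}_{P^\sharp}=\pi$ and $\gamma^n{\restriction}_B=\beta$; in particular $\gamma^n{\restriction}_Q=\sigma$. Thus $(P^\sharp,g{\restriction}_{P^\sharp})\hookrightarrow(C,\gamma)$ in $\mathcal{L}^1$, with $P^\sharp$ a finite $g$-invariant subgroup of $\Ha$, so (c) for the generic $g$ yields an embedding $\eta\colon C\to\Ha$ that is the inclusion on $P^\sharp$ and satisfies $g\circ\eta=\eta\circ\gamma$. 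Then $\eta(Q)$ is a finite $g^n$-invariant subgroup of $\Ha$ containing $P$ (it is $g^n$-invariant because $\gamma^n(Q)=Q$), and $\eta{\restriction}_Q\colon(Q,\sigma)\to(\eta(Q),g^n{\restriction}_{\eta(Q)})$ is an isomorphism which is the inclusion on $P$; this is the embedding $(Q,\sigma)\hookrightarrow(\Ha,g^n)$ required by (c).

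It follows that $(\Ha,g^n)$ is the Fra\"{\i}ss\'{e} limit of $\mathcal{L}^1$, hence isomorphic to $(\Ha,g)$; unwinding the definitions, such an isomorphism is a group automorphism $h$ of $\Ha$ with $h\circ g^n=g\circ h$, i.e.\ $g^n=h^{-1}gh$, so $g$ is conjugate to $g^n$ for every $n\geq 1$. (In particular $k:=h^{-1}gh$ satisfies $k^n=h^{-1}g^nh=g$, so $g$ has an $n$th root for every $n$, each such root being conjugate to $g$ and hence again generic. Alternatively one may avoid invoking uniqueness of the Fra\"{\i}ss\'{e} limit and instead build $h$ directly by a back-and-forth between $(\Ha,g)$ and $(\Ha,g^n)$ whose stage-$k$ approximation is an isomorphism $(A_k,g{\restriction}_{A_k})\to(B_k,g^n{\restriction}_{B_k})$: the ``forth'' steps use (a) together with property (c) for $(\Ha,g^n)$ just established, the ``back'' steps use (c) for $(\Ha,g)$, and the same closure-plus-amalgamation device is the only nonroutine ingredient.)
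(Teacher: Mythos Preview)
Your proof is correct, and it takes a genuinely different route from the paper's. The paper, following Rosendal~\cite{Rosendal}, works directly with the comeager class $C=\bigcap_i V_i$: it builds by hand an increasing chain of finite subgroups $A_i$ with automorphisms $g_i$ and $f_i=g_i^n$ so that the limits $g=\bigcup g_i$ and $f=\bigcup f_i$ both land in $C$, using density of the $V_i$ at each step and invoking Proposition~\ref{taking roots} to recover an $n$th root after extending $f_i$. Your argument instead invokes the Truss/Kechris--Rosendal characterisation of the generic as the automorphism for which $(\Ha,g)$ is the Fra\"{\i}ss\'{e} limit of $\mathcal{L}^1$, and then verifies the extension property for $(\Ha,g^n)$ directly, using the $g$-invariant closure $P^\sharp$ together with amalgamation in $\mathcal{L}^1$ and Proposition~\ref{taking roots}. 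Both approaches hinge on Proposition~\ref{taking roots} in exactly the same way---to promote an $n$th root on a subgroup to an $n$th root on a larger group while controlling the $n$th power---so the essential content is shared. What your approach buys is a cleaner statement: you show outright that $g^n$ is generic whenever $g$ is, which immediately yields the conjugacy for every generic $g$ without the final step of transporting the conclusion across the conjugacy class. The paper's approach, by contrast, is more self-contained in that it does not appeal to the Fra\"{\i}ss\'{e}-limit characterisation of genericity, working only with the Baire-category description of $C$.
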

\begin{proof}
 A basic open set in $\Aut(\Ha)$ is of the form
 $$U(f,A)=\{g\in\Aut(\Ha)\mid g\restriction_A=f\restriction_A\},$$
 where $A$ is a finite subgroup of $\Ha$ and $f$ is a partial isomorphism of $\Ha$ leaving $A$ invariant.
 Let $C$ be a comeager conjugacy class of $\Aut(\Ha)$ and take dense open sets $V_i\subseteq\Aut(\Ha)$ such that $C=\bigcap_i V_i$.
 We let $a_0,a_1,\cdots,a_m,\cdots$ enumerate the elements of $\Ha$.
 We shall define a sequence of finite subgroups  $A_0\leq A_1\leq\cdots\leq A_i\leq\cdots\leq\Ha$ and
 automorphisms $f_i, g_i$ of $A_i$ such that
 \begin{enumerate}
  \item $a_i\in A_{i+1}$,
  \item $f_{i+1}$ extends $f_i$ and $g_{i+1}$ extends $g_i$,
  \item $g_i^n=f_i$,
  \item $U(f_{i+1},A_{i+1})\subseteq V_i$ and $U(g_{i+1},A_{i+1})\subseteq V_i$.
 \end{enumerate}
We begin with $A_0=\emptyset$ and trivial automorphisms $f_0=g_0$.
Suppose $A_i, f_i, g_i$ are as defined. By Theorem \ref{Hall}, there is a finite subgroup $B\leq\Ha$
containing $a_i$ and $A_i$ such that there is an automorphism $h$ of $B$ extending $g_i$.
Since $V_i$ is dense open, we have $V_i\cap U(h,B)\neq\emptyset$.
Thus, there is a basic open set $U(k,C)\subseteq V_i\cap U(h,B)$,
where $C$ is a finite subgroup of $\Ha$ and $k$ is a partial isomorphism of $\Ha$ leaving $C$ invariant.
As $U(k,C)\subseteq U(h,B)$, we may assume that $B\leq C$ and $k$ extends $h$.
Again, as $V_i$ is dense open, there is a basic open set $U(p,D)\subseteq V_i\cap U(k^n, C)$, where $D\leq\Ha$ is a finite subgroup containing $C$,
and $p$ is a partial isomorphism of $\Ha$ leaving $D$ invariant and extending $k^n\restriction_C$.
By Proposition \ref{taking roots}, there is a finite subgroup $E\leq\Ha$ containing $D$ and an automorphism $q$ of $E$ extending $k\restriction_C$ such that $q^n$ extending $p\restriction_D$. We set $A_{i+1}=E$, $f_{i+1}=q^n$, and $g_{i+1}=q$. Clearly,
$$f_{i+1}=q^n\supseteq p\restriction_D\supseteq k^n\restriction_C\supseteq h^n\supseteq g_i^n=f_i,$$
and
$$g_{i+1}=q\supseteq k\restriction_C\supseteq h\supseteq g_i.$$
Then, we have $U(f_{i+1},A_{i+1})\subseteq U(p,D)\subseteq V_i$ and $U(g_{i+1},A_{i+1})\subseteq U(k,C)\subseteq V_i$.
Now, set $f=\bigcup_i f_i$ and $g=\bigcup_i g_i$. By the properties (1) and (2), $f$ and $g$ are both automorphisms of $\Ha$.
By (3), $g^n=f$, and by (4), $f,g\in\bigcap_i V_i=C$. Thus, $f$ and $g$ are in the same conjugacy class, and hence are mutually conjugate.
 \end{proof}

Now, following exactly Rosendal's proofs of \cite[Proposition 5 and Theorem 6]{Rosendal}, we have the following:

\begin{theorem}
 Every generic automorphism $g$ of Hall universal group $\Ha$ is conjugate to its nth power for every integer $n\neq 0$ and thus has roots of all orders.

 Moreover, there is an action of $(\Q,+)$ by automorphisms of $\Ha$ such that $g$ is the action by $1\in\Q$.
 \qed
\end{theorem}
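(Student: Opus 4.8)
The plan is to derive both halves from the theorem just proved, that a generic automorphism is conjugate to each of its positive powers, together with elementary facts about comeager conjugacy classes and two applications of Proposition~\ref{taking roots}. For the first assertion, note that $\Aut(\Ha)$ is Polish and, by ample generics, has a comeager conjugacy class $C$; moreover $C$ is unique, since two comeager conjugacy classes must intersect and hence coincide. Fix $g\in C$. The positive case being the theorem just proved, it suffices to show $g\sim g^{-1}$, for then $g\sim g^{-1}\sim(g^{-1})^k=g^{-k}$ for $k\geq 1$, and combining with the positive case, $g\sim g^m$ for every integer $m\neq 0$. But inversion $h\mapsto h^{-1}$ is a homeomorphism of $\Aut(\Ha)$, hence preserves Baire category, and it carries the conjugacy class of $g$ onto that of $g^{-1}$; so $C^{-1}$ is again a comeager conjugacy class and therefore $C^{-1}=C$, giving $g^{-1}\in C$ and $g\sim g^{-1}$. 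As usual, $g=hg^mh^{-1}$ yields $(hgh^{-1})^m=g$, so $g$ has roots of all orders.

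For the action of $(\Q,+)$, I would reorganize the back-and-forth of the previous theorem into a coherent tower, using $\Q=\bigcup_{n\geq1}\tfrac{1}{n!}\Z$. It suffices to build automorphisms $\tau_1=g,\tau_2,\tau_3,\dots$ of $\Ha$ with $\tau_{n+1}^{\,n+1}=\tau_n$ for all $n\geq1$: then $\theta_{k/n!}:=\tau_n^{\,k}$ is well defined (telescoping gives $\tau_m^{\,m!/n!}=\tau_n$ for $m\geq n$), defines a homomorphism $(\Q,+)\to\Aut(\Ha)$ with $\theta_1=g$, and $\theta_{1/n}^{\,n}=\theta_1=g$ exhibits $\theta_{1/n}$ as an $n$-th root of $g$. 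Fixing enumerations $\Ha=\{a_0,a_1,\dots\}$ and dense open sets $V_i$ with $C=\bigcap_iV_i$, the construction produces finite subgroups $A_1\leq A_2\leq\cdots$ of $\Ha$ with $\bigcup_iA_i=\Ha$ and automorphisms $\mu_i\in\Aut(A_i)$ such that $a_i\in A_{i+1}$, the single relation $\mu_{i+1}^{\,i+1}\restriction_{A_i}=\mu_i$ is preserved throughout, and $U\bigl(\mu_{i+1}^{\,(i+1)!},A_{i+1}\bigr)\subseteq V_i$; one then sets $\tau_n:=\bigcup_{i\geq n}\mu_i^{\,i!/n!}\restriction_{A_i}$ and checks, from the preserved relation, that each $\tau_n$ is a well-defined automorphism of $\Ha$, that $\tau_{n+1}^{\,n+1}=\tau_n$, and that $\tau_1=g$ lies in $\bigcap_iV_i=C$.

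The inductive step at stage $i$ carries the weight and interleaves three moves. First, applying the Hrushovski property (Theorem~\ref{Hall}) to $A_i\leq\langle A_i,a_i\rangle$ with the isomorphism $\mu_i\colon A_i\to A_i$, obtain a finite group $\hat A_i\supseteq A_i$ with $a_i\in\hat A_i$ and $\hat\mu_i\in\Aut(\hat A_i)$ extending $\mu_i$; set $w:=\hat\mu_i^{\,i!}$, which extends $\mu_i^{\,i!}$. Second, since $V_i$ is dense open, choose a basic open set $U(h,B')\subseteq V_i\cap U(w,\hat A_i)$ with $B'\supseteq\hat A_i$ finite and $h\in\Aut(B')$ extending $w$. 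Third, apply Proposition~\ref{taking roots} with bottom datum $(A_i,\mu_i)$, top datum $(B',h)$ and exponent $i!$ — whose hypothesis $\mu_i^{\,i!}=h\restriction_{A_i}$ holds since $h$ extends $w=\hat\mu_i^{\,i!}$ and $\hat\mu_i$ extends $\mu_i$ — to get a finite $C'\supseteq B'$ and $\lambda\in\Aut(C')$ with $\lambda\restriction_{A_i}=\mu_i$ and $\lambda^{\,i!}\restriction_{B'}=h$; then apply Proposition~\ref{taking roots} again, with trivial bottom group and exponent $i+1$, to obtain a finite $A_{i+1}\supseteq C'$ and $\mu_{i+1}\in\Aut(A_{i+1})$ with $\mu_{i+1}^{\,i+1}\restriction_{C'}=\lambda$. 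At each such step, realize the resulting finite overgroup inside $\Ha$ extending the previous one, which is possible by the homogeneity of $\Ha$. One then verifies $\mu_{i+1}^{\,i+1}\restriction_{A_i}=\lambda\restriction_{A_i}=\mu_i$ and, passing through $C'$ where $\mu_{i+1}^{\,(i+1)!}$ restricts to $\lambda^{\,i!}$, that $\mu_{i+1}^{\,(i+1)!}\restriction_{B'}=\lambda^{\,i!}\restriction_{B'}=h$, whence $U\bigl(\mu_{i+1}^{\,(i+1)!},A_{i+1}\bigr)\subseteq U(h,B')\subseteq V_i$, as needed.

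The main obstacle is organizational rather than conceptual. One must observe that the relation $\tau_{n+1}^{\,n+1}=\tau_n$ links only \emph{consecutive} members of the tower, so that at every stage a single relation is live and can be threaded through the two applications of Proposition~\ref{taking roots} — the first preserving the relation already in force while meeting the genericity requirement, the second creating the next relation — and one must keep careful track of which powers of $\mu_{i+1}$ leave which finite subgroups invariant when restrictions are formed (each power actually invoked does, since it restricts to an automorphism of the subgroup in question). With this bookkeeping arranged, the remainder follows Rosendal~\cite{Rosendal} essentially verbatim.
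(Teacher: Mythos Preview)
Your argument is correct and follows the same route as the paper, which simply defers to Rosendal's Proposition~5 and Theorem~6 in \cite{Rosendal}; you have written out those arguments in the setting of $\Ha$, with the inversion trick handling negative powers and the coherent tower $\tau_{n+1}^{\,n+1}=\tau_n$ (built via two applications of Proposition~\ref{taking roots} per stage) producing the $\Q$-action. One small slip to fix: your construction yields some $\tau_1\in C$, not the pre-chosen $g$, so you should remark that conjugating the entire $\Q$-action by any $h$ with $h\tau_1h^{-1}=g$ gives the desired action for an arbitrary generic $g$.
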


\begin{corollary}
 Every generic automorphism of Hall universal group is of infinite order. \qed
\end{corollary}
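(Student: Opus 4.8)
The plan is to derive this immediately from the preceding theorem, which says that every generic automorphism $g$ of $\Ha$ is conjugate to $g^n$ for every integer $n \neq 0$. So suppose toward a contradiction that $g$ is a generic automorphism of finite order, say $g^k = \mathrm{id}_{\Ha}$ for some $k \geq 1$; since generic automorphisms form a single (comeager) conjugacy class, it suffices to rule out this possibility for one such $g$. First I would observe that the trivial case $g = \mathrm{id}$ is excluded: the identity automorphism cannot be generic, because its conjugacy class $\{\mathrm{id}\}$ is a single point, hence not comeager in the uncountable Polish group $\Aut(\Ha)$ (indeed $\{\mathrm{id}\}$ is closed with empty interior, so it is meager). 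Thus we may assume $g$ has some finite order $k \geq 2$.

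Next I would use conjugacy to its powers to force a contradiction with finiteness of the order. By the theorem, $g$ is conjugate to $g^n$ for all $n \neq 0$; in particular, for any $n$, the order of $g^n$ equals the order of $g$, namely $k$. But if $g$ has finite order $k$, then choosing $n = k$ gives $g^k = \mathrm{id}$, whose order is $1 \neq k$ (since $k \geq 2$) — a direct contradiction. Alternatively, and perhaps more cleanly, one notes that conjugate elements have the same order, so the order $k$ of $g$ must equal the order of $g^n = k/\gcd(n,k)$ for all $n \neq 0$, which is only possible when $k = 1$, again contradicting $k \geq 2$.

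The "Moreover" clause of the preceding theorem gives an even more transparent route, which I would mention as an alternative: since there is an action of $(\Q,+)$ by automorphisms of $\Ha$ with $g$ the image of $1 \in \Q$, and $1$ has infinite order in the torsion-free group $(\Q,+)$, and a group action by automorphisms restricted to the cyclic subgroup generated by a single element is a homomorphism, the element $g$ generates an infinite cyclic subgroup of $\Aut(\Ha)$; hence $g$ has infinite order. I do not expect any real obstacle here — the content is entirely carried by the previous theorem, and the corollary is a one-line consequence; the only point requiring a word of care is to first dispose of the degenerate case $g = \mathrm{id}$, which as noted above is not generic for cardinality/Baire-category reasons.
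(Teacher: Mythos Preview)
Your main argument is correct and matches the paper, which states the corollary with an immediate \qed as a direct consequence of the preceding theorem. One caveat on your alternative route via the $\Q$-action: a homomorphism $(\Q,+)\to\Aut(\Ha)$ sending $1\mapsto g$ need not be injective on $\Z$ (its kernel could be $k\Z$, with image $\Q/k\Z\cong\Q/\Z$), so the mere existence of such an action does not by itself force $g$ to have infinite order---you still need the conjugacy-to-powers argument, which you already gave correctly.
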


\subsection*{Acknowledgements}\label{ackref}
I would like to thank Everett C. Dade for giving me his proof of Theorem \ref{Hall} during the ASL 2015 Annual North American meeting in Urbana, IL.
I am also grateful to Su Gao, Haipeng Qu and Christian Rosendal for conversations and comments regarding the paper.


\begin{thebibliography}{99}% Replace 9 by 99 if 10 or more references
%
% Please note the use of "\and" between author names below
%

\bibitem{Hall}
P.\,Hall, Some constructions for locally finite groups, {\em J. London Math. Soc. } \textbf{34} (1959) 305--319.

\bibitem{Hodges}
W.\,Hodges,
 {\em Model theory} (Cambridge University Press, 1993).

\bibitem{HHLS}
W.\,Hodges, I.\,Hodkinson, D.\,Lascar \and S.\,Shelah, The small index property for $\omega$-stable $\omega$-categorical structures and for the random graph,
{\em J. London Math. Soc. } \textbf{48} (1993) 204--218.

\bibitem{Hrushovski}
E.\,Hrushovski, Extending partial isomorphisms of graphs, {\em Combinatorica }\textbf{12} (1992) 411--416.

\bibitem{Ivanov}
A.\,A.\,Ivanov, Strongly bounded automorphism groups, {\em Colloq. Math. }\textbf{105} (2006) 57--67.

\bibitem{KR}
A.\,S.\,Kechris \and C.\,Rosendal, Turbulence, amalgamation, and generic automorphisms of homogeneous structures, {\em Proc. London Math. Soc. } \textbf{94} (2007) 302--350.

\bibitem{Ola}
A.\,Kwiatkowska, The group of homeomorphisms of the Cantor set has ample generics, {\em Bull. London Math. Soc. } \textbf{44} (2012) 1132--1146.

\bibitem{MT}
H.\,D.\,Macpherson \and K.\,Tent, Simplicity of some automorphism groups, {\em J. Algebra } \textbf{342} (2011) 40--52.

\bibitem{Neumann}
B.\,H.\,Neumann, Permutational products of groups, {\em J. Aust. Math. Soc. Ser. A } \textbf{1} (1959) 299--310.

\bibitem{Robinson}
D.\,J.\,S.\,Robinson,
 {\em A course in the theory of groups} (Springer, New York, 1996).

\bibitem{Rosendal}
C.\,Rosendal, The generic isometry and measure preserving homeomorphism are conjugate to their powers, {\em Fund. Math.} \textbf{205} (2009) 1--27.

\bibitem{Solecki}
S.\,Solecki, Extending partial isometries, {\em Israel J. Math. } \textbf{150} (2005) 315--331.

\bibitem{Truss}
J.\,K.\,Truss, Generic automorphisms of homogeneous structures, {\em Proc. London Math. Soc. } \textbf{65} (1992) 121--141.

\end{thebibliography}
\end{document}